\def\argmin{\operatornamewithlimits{arg\ min}}
\def\sgn{\operatornamewithlimits{sgn}}
\def\diam{\operatornamewithlimits{diam}}
\def\T{{\mathcal T}}
\def\E{{\mathcal E}}
\def\argmin{\arg\min}
\def\d{{\rm div}}
\def\bn{{\bf n}}
\def\bq{{\bf q}}
\def\bv{{\bf v}}
\def\pT{{\partial T}}
\def\3bar{{|\!|\!|}}
\def\bbeta{{\boldsymbol\beta}}
\newtheorem{algorithm}{CFO Algorithm}[section]
\def\argmin{\operatornamewithlimits{arg\ min}}
\def\osc{\operatornamewithlimits{osc}}
\def\essensup{\operatornamewithlimits{essen\ sup}}
\title{A Conservative Flux Optimization Finite Element Method for Convection-Diffusion Equations}
\author{Yujie Liu\thanks{School of Data and Computer Science, Sun Yat-sen University, Guangzhou, 510275, China (liuyujie5@mail.sysu.edu.cn). The research of Liu was partially supported by Guangdong Provincial Natural Science Foundation (No. 2017A030310285), Shandong Provincial natural Science Foundation (No. ZR2016AB15) and Youthful Teacher Foster Plan Of Sun Yat-Sen University (No. 171gpy118),} \and Junping Wang
\thanks{Division of Mathematical Sciences, National Science Foundation, Arlington, VA 22230
(jwang@nsf.gov). The research of Wang was supported by the NSF IR/D program, while working at
National Science Foundation. However, any opinion, finding, and
conclusions or recommendations expressed in this material are those
of the author and do not necessarily reflect the views of the
National Science Foundation,}
\and Qingsong Zou
\thanks{School of Data and Computer
  Science, and Guangdong Province Key Laboratory of Computational Science, Sun Yat-sen
          University, Guangzhou 510006, China(mcszqs@mail.sysu.edu.cn).  The research of Zou was supported in part by the following Grants:
           the special project high performance computing of National Key Research and Development Program 2016YFB0200604, NSFC 11571384, Guangdong Provincial NSF 2014A030313179, the Fundamental Research Funds for the Central Universities 16lgjc80.}
}
\begin{document}

\maketitle

\begin{abstract}
This article presents a new finite element method for convection-diffusion equations by enhancing the continuous finite element space with a flux space for flux approximations that preserve the important mass conservation locally on each element. The numerical scheme is based on a constrained flux optimization approach where the constraint was given by local mass conservation equations and the flux error is minimized in a prescribed topology/metric. This new scheme provides numerical approximations for both the primal and the flux variables. It is shown that the numerical approximations for the primal and the flux variables are convergent with optimal order in some discrete Sobolev norms. Numerical experiments are conducted to confirm the convergence theory. Furthermore, the new scheme was employed in the computational simulation of a simplified two-phase flow problem in highly heterogeneous porous media. The numerical results illustrate an excellent performance of the method in scientific computing.
\end{abstract}

\begin{keywords} conservative flux, primal-dual weak Galerkin, finite element methods,
finite volume method.
\end{keywords}

\begin{AMS}
Primary, 65N30, 65N15, 65N12; Secondary, 35B45, 35J50, 76S05, 76T99, 76R99
\end{AMS}

\pagestyle{myheadings}

%

\section{Introduction}

This paper is concerned with the development of numerical methods for partial differential equations that maintain important conservation properties for the underlying physical variables. For simplicity, consider the model convection-diffusion equation that seeks an unknown function $u=u(x)$ satisfying
\begin{eqnarray}
-\nabla\cdot(\alpha\nabla u + \bbeta u)&=&f,\qquad {\rm in}\  \Omega  \label{ellipticbdy}\\
u&=&0,\qquad {\rm on}\ \partial\Omega \label{ellipticbc}
\end{eqnarray}
where $\Omega\subset {\bf R}^d (d=2,3)$ is a bounded polygonal ($d=2$) or polyhedral ($d=3$) domain with boundary $\partial\Omega$, and $\alpha=\{\alpha_{i,j}\}_{d\times d}$ is a symmetric, positive definite tensor; i.e., there exists a positive constant $\alpha_0$ such that
$$
\xi^T\alpha\xi\ge \alpha_0 \xi^T\xi, \qquad \forall \xi \in \Omega.
$$
In some applications, such as the flow of fluid in porous media governed by Darcy's law, the quantity of interest is often the flow velocity represented by $\bq = - (\alpha\nabla u + \bbeta u).$ With the velocity $\bq$, the equation \eqref{ellipticbdy} can be rewritten as
$\nabla\cdot\bq = f$, so that on any subdomain $T\subset\Omega$, from the divergence theorem one has
\begin{equation}\label{EQ:conservation}
\int_T \nabla\cdot\bq dx = \int_T f dx \ \ \Longleftrightarrow \int_{\partial T} \bq\cdot\bn ds = \int_T f dx,
\end{equation}
where $\bn$ is the outward normal direction of $\partial T$. The equations in \eqref{EQ:conservation}, especially the one on the right, characterize the mass conservation property for the porous media flow. The quantities that enter into the mass
conservation equation is the flux variable $q_n = \bq\cdot\bn$ on the boundary of any control element $T$.

A numerical scheme for the model convection-diffusion equation \eqref{ellipticbdy}-\eqref{ellipticbc} is said to be conservative if it provides a numerical solution, denoted by $u_h$, and an associated numerical flux $q_{n,h}$ on the boundary of a set of prescribed control elements $\T_h=\{T\}$ such that
\begin{equation}\label{EQ:masscon-h}
\left\{
\begin{array}{cl}
u_h \rightarrow u, & \quad \mbox{as } h\to 0,\\
q_{n,h} \rightarrow q_n, & \quad \mbox{as } h\to 0, \\
\int_{\partial T} q_{n,h} ds = \int_T f dx, & \quad \forall T\in\T_h,
\end{array}
\right.
\end{equation}
where the convergence in \eqref{EQ:masscon-h} should be understood under certain prescribed topologies for the corresponding variables. The third line of \eqref{EQ:masscon-h} is the local mass conservation which is a highly preferred property of the algorithm in practical computing.

The classical continuous Galerkin finite element method is a popular and easy-to-implement numerical scheme for the model equation \eqref{ellipticbdy}-\ref{ellipticbc}. In the most simple formulation, the $P_1$-continuous Galerkin finite element scheme seeks $u_h\in S_h$ satisfying
\begin{equation}\label{EQ:CGFEM}
(\alpha\nabla u_h + \bbeta u_h, \nabla v) = (f,v),\qquad \forall v\in S_h,
\end{equation}
where $S_h\subset H_0^1(\Omega)$ consists of $C^0$-piecewise linear functions on a prescribed finite element triangulation $\T_h$. The numerical scheme (\ref{EQ:CGFEM}) provides a direct approximation $u_h$ to the primal variable $u=u(x)$ from which a numerical velocity can be computed as
\begin{equation}\label{EQ:naiveflux}
\bq_h = - ( \alpha \nabla u_h + \bbeta u_h)
\end{equation}
on each element $T\in \T_h$. It is not hard to see that the numerical velocity $\bq_h$ is discontinuous across the edges of $T\in\T_h$ along the normal direction (which is the flux). Furthermore, a simple average of the numerical flux $\bq_h$ on each edge may give a continuous numerical flux, but usually do not preserve the mass conservation property. For this reason, the continuous Galerkin finite element method is often said to be non-conservative with respect to $\T_h$.

Attempts of making the continuous Galerkin to be conservative have been made by various researchers in the scientific community for the last three decades. To the author's knowledge, all the existing work in this endeavor explore the use of various postprocessing techniques for the primal variable $u_h$. For example, in \cite{s-chou}, a conservative flux was obtained through a postprocessing procedure for either $P_1$-conforming or nonconforming Galerkin finite element approximations, where the reconstructed numerical flux was sought in the Raviart-Thomas space of the lowest order. In \cite{T-Hughes}, the authors devised a postprocessing procedure for continuous Galerkin finite element approximations on any user-selected subdomain. Specifically, for the subdomain under consideration, the authors introduced an auxiliary boundary flux field and developed a formulation which reduces to the usual continuous Galerkin method plus a modification designed for attaining global conservation. In \cite{Larson}, the authors developed a different postprocessing technique for computing a numerical flux on element boundaries that is element-wise conservative for the continuous Galerkin approximation. Their technique was based on the computation of a correction of the averaged normal flux on element boundaries by using the jump of piecewise constants or linear functions to be determined as the solution of a global linear system. A modified version of \cite{Larson} was employed in \cite{Odsater} to produce a conservative flux for both steady-state and dynamic flow models by adding a piecewise constant correction that is minimized in a weighted $L^2$ norm. The postprocessed flux in \cite{Odsater} was shown to have the same rate of convergence as the original, but non-conservative flux. In \cite{Bochev}, the authors studied the compatible least-squares method for the Darcy flow equation, and further developed a flux-correction procedure to obtain a locally conservative numerical solution without compromising its $L^2$ accuracy. In \cite{Cockburn_Jay}, the authors developed a two-step postprocessing procedure for computing a conservative flux for continuous Galerkin finite element approximation. Their first step involves the computation of a numerical flux trace defined on element interfaces. The second step is a local element-by-element postprocessing of the continuous Galerkin approximation by incorporating the result from the first step. In \cite{Mudunuro}, the authors devised a computational framework for advective-diffusive-reactive systems with approximate solutions satisfying desired properties such as maximum principles, the solution non-negative constraint, and element-wise conservative property. This method employs a low-order mixed finite element formulation based on least-squares formalism by enforcing explicit constraints of various type. In \cite{ZhangZhangZou_NMPDE_2017}, the authors develop
elementwisely-conservative  flux also by post-processing the FEM solution element-by-element. Their method is  valid for any order schemes and their post-processed solution converges with optimal order both in $H^1$ and $L^2$ norms. Very recently in \cite{ZouGuoDeng_SINUM_2017}, a volume-wisely conservative flux field has been derived by post-processing the finite element solution. One important feature of their method is that their derived flux field is continuous even across the internal edges of the underlying mesh.

In the literature, one can find various numerical methods for \eqref{ellipticbdy}-\eqref{ellipticbc} that preserve the mass conservation property \eqref{EQ:masscon-h} locally on each element $T\in\T_h$. One of such methods is the finite volume method (FVM) widely used in scientific computing for problems in science and engineering, including fluid dynamics \cite{Barth.T;Ohlberger2004,Emonot1992,EymardGallouetHerbin2000,LeVeque2002,Li.R2000,
Nicolaides1995,Shu2003}.
Most algorithms in FVM enjoy the nice feature of algorithmic simplicity and computational efficiency, and some of the low order FVMs (e.g., $P_0$ and $P_1$ schemes) have been well studied for their mathematical convergence and stability \cite{Bank.R;Rose.D1987,Cai.Z1991,ChouLi2000,Hackbusch.W1989a,Li.R2000,Xu.J.Zou.Q2009}. It should also be noted that the high order and symmetric FVMs are generally challenging in theory and algorithmic design \cite{CaiDouglasPark2003,Chen2010,ChenWuXu2012,LinYangZou2015,ZhangZou_NM_2015}. In the finite element context, several conservative numerical schemes have been developed. The mixed finite element method \cite{RaviartThomas, bdm}, the discontinuous Galerkin finite element method \cite{Arnold_SIAMJNA_2002}, the hybridizable discontinuous Galerkin \cite{Cockburn_SIAMJNA_1998}, and weak Galerkin finite element methods \cite{WangYe_2013, wy3655, WangWang_2016} are a few of such examples that give numerical approximations with conservative numerical flux.

In this work, we do not intend to pursue the approaches along above mentioned directions.
Instead, we shall design a new conservative numerical scheme for \eqref{ellipticbdy}-\eqref{ellipticbc} via a conservation-constrained optimization approach by using the continuous finite element space $S_h$. More precisely, let $V_h$ be a discrete flux space defined on the edge set of the triangulation $\T_h$, and $r\in [1,\infty)$ be any given real number. Our numerical method seeks $u_h\in S_h$ and $q_{n,h}\in V_h$ that minimizes the flux-error function
\begin{equation}\label{EQ:functional-intro}
J_r(u_h,q_{n,h}):= \frac{1}{r}\sum_{T\in\T_h} \sum_{e\subset\pT} h_T \int_e |q_{n,h}+\alpha\nabla u_h \cdot \bn +\bbeta u_h\cdot\bn|^r ds,
\end{equation}
subject to the constraint of the mass conservation equation of $\int_T q_{n,h} ds = \int_T f dx$ on each element $T\in \T_h$. In the case of $r=2$, the Euler-Lagrange form of this constrained optimization problem yields a system of linear equations for the unknown variables $u_h$, $q_{n,h}$, and another variable known as the Lagrange multiplier. Our new method essentially looks for a conservative flux variable that best approximates the obvious, but non-conservative numerical velocity $\bq_h = - (\alpha\nabla u_h +\bbeta u_h)$ in a discrete metric. For this reason, the numerical scheme is named as {\em Conservative Flux Optimization (CFO)} finite element method in this article. It should be pointed out that the {\em CFO} finite element method was originally motivated by the idea of the primal-dual weak Galerkin method (namely, PDE-constraint minimization of stabilizers) presented as in \cite{WangWang_2016} for the second order elliptic equation in non-divergence form.

The main contributions of this work are the following: (1) A conservative numerical scheme is devised to yield approximate solutions for the primal and the flux variables simultaneously without using any postprocessing. This flux-optimization based discretization technique is generally applicable to other partial differential equations; (2) An optimal order of convergence is established for the numerical flux in the $L^2$ norm; (3) The numerical solution $u_h$ for the primal variable is shown to have optimal order of convergence in $H^1(\Omega)$; (4) Numerical experiments are conducted on test problems involving varying and discontinuous coefficients, and the results strongly suggest an optimal order of convergence in $L^2(\Omega)$ for the primal variable; (5) The new scheme is applied to a simplified two-phase flow problem in a highly heterogeneous porous media, and the expected fingering phenomenon is clearly shown in the corresponding computational simulation.

 Our numerical scheme can also be viewed as a finite volume method -- vertex-based for the primal variable $u_h$ and element-based for the flux variable $q_{n,h}$.
In the literature, FVM algorithms are often classified into two categories: (1) vertex-centered where the computational nodes are positioned at the vertices, and (2) cell-centered where the computational nodes are positioned at the center of the cells/elements. Usually for vertex-centered schemes, one discretizes the underlying PDE by asking the numerical solution satisfying the flux conservation on a {\em dual mesh} consisting of {\it control volumes} given as polygons (2D) or polyhedra (3D) surrounding the mesh vertices. The mathematical convergence for vertex-centered schemes is often established in a way that mimics the corresponding finite element formulation \cite{Bank.R;Rose.D1987,Cai.Z1991,Hackbusch.W1989a,Schmidt1993,Xu.J.Zou.Q2009,ZhangZou_NM_2015}.
For cell-centered finite volume schemes, their theoretical convergence is usually obtained in carefully chosen discrete norms on meshes with certain structures \cite{EymardGallouet2006,Lazarov1996,Suli_MCOM_1992,Yubo2012}.
The new scheme of this paper belongs to the category of element-centered FVMs in a broad sense, as the mass conservation \eqref{EQ:conservation} is imposed on each element of the partition $\T_h$. Compared with the vertex-centered FVMs, our scheme does not require a deliberate design of any dual mesh in the computation. Compared with the cell-centered FVMs, our numerical solution $u_h$ belongs to the continuous finite element space so that its convergence is within the reach of currently available mathematical techniques with generalization to unstructured partitions. It worths mentioning that our scheme is symmetric for non-symmetric PDE problems while the classical vertex-centered FVMs are often non-symmetric even for symmetric PDE problems.

The paper is organized as follows. In Section \ref{sectionccfv}, we present the conservative flux optimization finite element scheme for the model problem \eqref{ellipticbdy}-\eqref{ellipticbc}. In Section \ref{sectionWpS}, we establish a result on the well-posedness and stability for the conservative flux optimization scheme. In Section \ref{sectionEE}, we derive some error estimates for the resulting numerical approximations in various Sobolev norms. Finally, in Section \ref{numerical-experiments}, we present a few numerical results to demonstrate the efficiency and accuracy of the new scheme. In particular, we will first verify the theoretical convergence through couple of testing examples, and then demonstrate the power of the new scheme in scientific computing through a simplified two-phase flow problem in highly heterogeneous porous media.

\section{A Conservative Flux Optimization Scheme}\label{sectionccfv}

Let $\T_h$ be a regular triangulation of the polygonal domain $\Omega\subset\mathbb{R}^2$. Denote by $h\coloneqq \max_{{T} \in \T_h}h_{T}$ the meshsize of $\T_h$, where $h_{T}=\diam({T})$ is the diameter of the element ${T} \in \T_h$. Denote by $\E_h$ the edge set of $\T_h$, and $\E_h^0 \subset \E_h$ the set of all interior edges. The set of boundary edges is denoted as $\E_h^B\coloneqq\E_h \backslash\E_h^0$. The diameter of the edge $e\in \E_h$ is denoted as $h_e=\diam(e)$. For convenience, for each $e\in\E_h$ we assign a normal direction $\bn_e$ which provides an orientation for $E_h$.

Recall that the classical ${P}_1$-conforming element is given by
\[
S_h=\{v\in C^0(\Omega):  \  v|_{T}\in {P}_1(T), \forall {T}\in\T_h, v|_{\partial\Omega}=0\}.
\]
Here $C^0(\Omega)$ stands for the space of continuous functions on the domain $\Omega$. Denote by $W_h$ the space of piecewise constant functions on $\T_h$ and $V_h$ the space of piecewise constant functions on the edge set $\E_h$.


For any $q\in L^2(\E_h)$, denoted by $\nabla_w\cdot q$ the discrete weak divergence given as a function in $W_h$ such that on each $T\in\T_h$
\begin{equation}\label{EQ:001}
(\nabla_w\cdot q)|_T =\frac{1}{|T|}\int_{\partial T} q \bn\cdot {\bf n}_e ds,
\end{equation}
where $\bn$ is the outward normal vector of $\partial T$, and $\bn_e$ is the prescribed orientation of $e\subset\partial T$; see Figure \ref{fig:triangle} for an illustration where $\bn$ coincides with $\bn_e$.

\begin{figure}
\begin{center}
\begin{tikzpicture}[rotate=233]
    \path (0,0) coordinate (A3);
    \path (3,0) coordinate (A1);
    \path (3.4,-0.2) coordinate (A11);
    \path  (1.92, 1.44) coordinate (M);
    \path (0,4) coordinate (A2);
    \path (-0.2,4.4) coordinate (A21);
    \path (1.0,1.0) coordinate (center);
    \path (1.5,0) coordinate (A1half);
    \path (0,2) coordinate (A2half);
    \path (1.5,2) coordinate (A3half);
    \path (2.0,0.5) coordinate (a);
    \path (A1half) ++(0,-0.6) coordinate (A1To);
    \path (A2half) ++(-0.6,0) coordinate (A2To);
    \path (A3half) ++(38:0.6cm) coordinate (A3To);
    \draw (A3) -- (A1) -- (A2) -- (A3);
    \draw node at (center) {$P_1(T)$};
    \draw node[right] at (A1half)(2.625, 0.5) {$q_{2}$};
    \draw node[left] at (A2half) {$q_{1}$};
    \draw node[above] at (A3half) {$q_{3}$};

    \draw[->,thick] (A1half) -- (A1To) node[above]{$\mathbf{n}_2$};
    \draw[->,thick] (A2half) -- (A2To) node[right]{$\mathbf{n}_1$};
    \draw[->,thick] (A3half) -- (A3To) node[right]{$\mathbf{n}_3$};
\end{tikzpicture}
\end{center}
\caption{An illustrative triangular element with local flux}
\label{fig:triangle}
\end{figure}
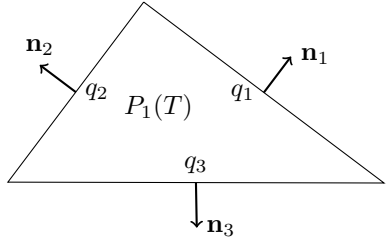

A flux function $p\in V_h$ is said to be an {\it admissible discrete flux} to the equation $\nabla\cdot\bq = f$ if it satisfies
\begin{equation}\label{fv}
(\nabla_w\cdot p, w)=(f, w),\qquad \forall w\in W_h.
\end{equation}
As $W_h$ consists of piecewise constant functions, the equation (\ref{fv}) is equivalent to the following element-wise identity
\begin{equation}\label{fv-ele}
(\nabla_w\cdot p, 1)_T=(f, 1)_T,\qquad \forall T\in \T_h
\end{equation}
Let $Q_h$ be the $L^2$ projection onto the finite element space $W_h$. From (\ref{fv}), a discrete flux $p\in V_h$ is {\em admissible} if and only if $\nabla_w \cdot p = Q_h f$.

The convection-diffusion equation (\ref{ellipticbdy}) can be rewritten as a system of linear equations:
\begin{equation}\label{EQ:system-linear-eq}
\bq = - (\alpha \nabla u + \bbeta u),\qquad \nabla\cdot\bq = f.
\end{equation}
A pair of finite element functions $(u_h;q_h)\in S_h\times V_h$ is said to be an ideal approximation of (\ref{EQ:system-linear-eq}) with the boundary condition (\ref{ellipticbc}) if, on each element $T\in\T_h$, one has
\begin{eqnarray}\label{EQ:ideal-solution-1}
q_h + (\alpha^* \nabla u_h + \bbeta^* u_h)&=&0\qquad \mbox{ on } \pT,\\
\nabla_w\cdot q_h & = & Q_h f\qquad \mbox{ in } T, \label{EQ:ideal-solution-2}
\end{eqnarray}
where $\alpha^*$ and $\bbeta^*$ are the trace of $\alpha$ and $\bbeta$ on $\pT$ as taken from the element $T$, respectively. The equation (\ref{EQ:ideal-solution-2}) ensures the local mass conservation by the discrete flux $q_h$.

It is not hard to see that the ideal approximation for (\ref{EQ:system-linear-eq}) generally may not exist in the finite element space $S_h\times V_h$. A remedy to the solution nonexistence is to find a pair $(u_h;q_h)\in S_h\times V_h$ that satisfies the mass conservation equation (\ref{EQ:ideal-solution-2}) while the flux error $q_h + (\alpha^* \nabla u_h + \bbeta^* u_h)$ be minimized in a metric at the user's discretion. To this end, we introduce a functional in the space $S_h\times V_h$ as follows:
\begin{equation}\label{EQ:functional}
J_r(v,p):= \frac{1}{r}\sum_{T\in\T_h} \sum_{e\subset\pT} h_T \int_e |p+\alpha^*\nabla v \cdot \bn_e +\bbeta^* v\cdot\bn_e|^r ds,
\end{equation}
where $r\in [1,\infty)$ is a prescribed value. Our numerical algorithm then seeks $(u_h;q_h)\in S_h\times V_h$ which minimizes the functional $J_r$ under the constraint (\ref{EQ:ideal-solution-2}). Due to the emphasis on the mass conservation and the error minimization for the flux approximation, we shall name this discretization algorithm a {\em Conservative Flux Optimization Finite Element Method} or {\em Conservative Flux Optimization (CFO)} in brief. The CFO algorithm can be mathematically stated as follows:

\begin{algorithm}
Find $u_h\in V_h$ and $q_h\in U_h$ such that
\begin{equation}\label{min}
(u_h;q_h)=\argmin_{v\in S_h, p\in V_h, \nabla_w \cdot p = Q_h f} J_r(v,p).
\end{equation}
\end{algorithm}

\bigskip

By introducing a Lagrange multiplier $\lambda_h\in M_h$, the constrained minimization problem (\ref{min}) can be reformulated in the Euler-Lagrange form that seeks $(u_h;q_h)\in S_h\times V_h$ and $\lambda_h\in M_h$ satisfying
\begin{eqnarray}\label{min-LagrangeForm-01}
\langle D J_r(u_h,q_h), (v;p)\rangle + (\nabla_w\cdot p, \lambda_h)&=& 0,\qquad \forall (v;p)\in S_h\times V_h,\\
(\nabla_w\cdot q_h, w) & = & (f,w),\qquad \forall w\in W_h, \label{min-LagrangeForm-02}
\end{eqnarray}
where
\begin{equation}\label{EQ:FDerivative}
\langle D J_r(u_h,q_h), (v;p)\rangle:= \sum_{T\in\T_h} h_T\sum_{e\subset\pT} \int_e |q_h+\alpha^*\nabla u_h \cdot \bn_e +\bbeta^* u_h\cdot\bn_e|^{r-1} (p+\alpha^*\nabla v \cdot \bn_e +\bbeta^* v\cdot\bn_e )\ Sgn \ ds
\end{equation}
is the Fr\'echet derivative of the functional $J_r$ at $(u_h;q_h)$ along the direction of $(v;p)$. Here $Sgn=\sgn(q_h+\alpha^*\nabla u_h \cdot \bn_e +\bbeta^* u_h\cdot\bn_e)$ represents the sign of the corresponding term. For the case of $r=2$, the Fr\'echet derivative $\langle D J_r(u_h,q_h), (v;p)\rangle$ defines a bilinear form given as follows
\begin{equation}\label{EQ:FDerivative2}
s_h(u_h;q_h), (v;p):= \int_{T\in\T_h} \sum_{e\subset\pT} h_T\int_e (q_h+\alpha^*\nabla u_h \cdot \bn_e +\bbeta^* u_h\cdot\bn_e) (p+\alpha^*\nabla v \cdot \bn_e +\bbeta^* v\cdot\bn_e )\ ds,
\end{equation}
so that the Euler-Lagrange equations (\ref{min-LagrangeForm-01})-(\ref{min-LagrangeForm-02}) read as below
\begin{eqnarray}\label{min-LagrangeForm2-01}
s_h(u_h;q_h), (v;p)) + (\nabla_w\cdot p, \lambda_h)&=& 0,\qquad \forall (v;p)\in S_h\times V_h,\\
(\nabla_w\cdot q_h, w) & = & (f,w),\qquad \forall w\in W_h. \label{min-LagrangeForm2-02}
\end{eqnarray}

\section{Well-posedness and Stability}\label{sectionWpS}

In this section, we shall study the solution existence and uniqueness of the conservative flux optimization finite element scheme (\ref{min}) with $r=2$. Recall that the corresponding Euler-Lagrange formulation is a system of linear equations given by (\ref{min-LagrangeForm2-01})-(\ref{min-LagrangeForm2-02}).

\begin{definition} For any element $T\in \T_h$, the oscillation of a function $\sigma=\sigma(x)\in L^\infty(\Omega)$ on $T$ is defined as
$$
\osc(\sigma, T):= \essensup_{x,y\in T} |\sigma(x)-\sigma(y)|
$$
The local oscillation of $\sigma=\sigma(x)$ with respect to the partition $\T_h$ is given by
$$
\osc(\sigma, \T_h):=\max_{T\in\T_h} \osc(\sigma, T).
$$
\end{definition}

\begin{definition}
For a given non-negative integer $m$, a function $\sigma=\sigma(x)\in L^\infty(\Omega)$ is said to be uniformly piecewise $C^m$ with respect to the partition $\T_h$ if $\sigma|_T \in W^{m,\infty}(T)$ for each $T\in \T_h$. Furthermore, for any given $\epsilon>0$, there exists a $\delta>0$ such that
$$
\osc(\partial^{s} \sigma, \T_h)< \epsilon,\qquad 0\le |s|=s_1+s_2 \le m
$$
whenever $h<\delta$. Here $s=(s_1,s_2)$ is a multi-index and $\partial^s = \partial_x^{s_1}\partial_y^{s_2}$ is the corresponding partial differential operator.
\end{definition}

Denote by $C^m(\T_h)$ the space of functions that are uniformly piecewise $C^m$ with respect to the finite element partition $\T_h$.

Next, for any given $\theta\in H^{-1}(\Omega)$ let $w=w(\theta)\in H_0^1(\Omega)$ be the solution of the following auxiliary problem:
\begin{equation}\label{EQ:auxiliary-problem}
(\nabla w, \alpha\nabla\phi + \bbeta \phi)= (\theta, \phi),\qquad \forall \phi\in H_0^1(\Omega).
\end{equation}
The problem (\ref{EQ:auxiliary-problem}) is given by the dual of the differential operator in the model problem (\ref{ellipticbdy}). From the solution existence and uniqueness assumption for (\ref{ellipticbdy}), the problem (\ref{EQ:auxiliary-problem}) has a unique solution $w\in H_0^1(\Omega)$ satisfying the a priori estimate
\begin{equation}\label{EQ:August:001}
\|w\|_1 \leq C \|\theta\|_{-1}.
\end{equation}

\begin{lemma}\label{Lemma:GeneralIdentity}
Assume that the coefficients of the model PDE (\ref{ellipticbdy}) are uniformly piecewise continuous; i.e., $\alpha\in C^0(\T_h)$ and $\bbeta\in  C^0(\T_h)$. Given any $v\in S_h$, let $q_v = \alpha\nabla v +\beta v$ be the corresponding flux. Then for any $\theta\in H^{-1}(\Omega)$, the following identity holds true
\begin{eqnarray}\label{erreqn1}
(\theta, v)&=&\sum_{T\in\T_h}\sum_{e\subset\partial T} \langle(q_v + p \bn_e) \cdot \bn, w-Q_0w\rangle_e  +(\nabla_w \cdot p, w) \nonumber \\
          &&+\sum_{T\in\T_h} ((I-Q_0)q_v,\nabla w)_T + \sum_{T\in\T_h}\sum_{e\subset\partial T} \langle(Q_0-I)q_v\cdot \bn, w-Q_0w\rangle_e,
\end{eqnarray}
where $p\in V_h$ is arbitrary, $Q_0$ is the $L^2$ projection onto $W_h$, and
$w\in H^1_0(\Omega)$ is the solution of the auxiliary problem (\ref{EQ:auxiliary-problem}).
 \end{lemma}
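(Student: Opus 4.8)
The plan is to test the dual problem (\ref{EQ:auxiliary-problem}) with the discrete function itself and then account for the resulting integrals edge by edge. Since $v\in S_h\subset H_0^1(\Omega)$ is an admissible test function, choosing $\phi=v$ in (\ref{EQ:auxiliary-problem}) immediately gives $(\theta,v)=(\nabla w,\alpha\nabla v+\bbeta v)=(\nabla w,q_v)=\sum_{T\in\T_h}(\nabla w,q_v)_T$. I would then split the flux by the $L^2$ projection, $q_v=Q_0 q_v+(I-Q_0)q_v$, so that $(\theta,v)=\sum_T(\nabla w,Q_0 q_v)_T+\sum_T((I-Q_0)q_v,\nabla w)_T$. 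The second sum is already the third term on the right-hand side of (\ref{erreqn1}), so the whole argument reduces to rewriting $\sum_T(\nabla w,Q_0 q_v)_T$ as the remaining boundary terms.

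For that piece I would integrate by parts on each element. Because $Q_0 q_v$ is a constant vector on $T$, it is divergence-free inside $T$, and the divergence theorem yields $(\nabla w,Q_0 q_v)_T=\sum_{e\subset\pT}\langle Q_0 q_v\cdot\bn,w\rangle_e$. The next observation is that $w$ may be replaced by $w-Q_0 w$ in these traces: for each fixed $T$ the constant vector $Q_0 q_v$ and the constant $Q_0 w$ satisfy $\sum_{e\subset\pT}\langle Q_0 q_v\cdot\bn,Q_0 w\rangle_e=(Q_0 w)|_T\int_{\pT}Q_0 q_v\cdot\bn\,ds=(Q_0w)|_T\int_T\nabla\cdot(Q_0 q_v)\,dx=0$. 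Hence $\sum_T(\nabla w,Q_0 q_v)_T=\sum_T\sum_{e\subset\pT}\langle Q_0 q_v\cdot\bn,w-Q_0 w\rangle_e$, and re-splitting $Q_0 q_v=q_v+(Q_0-I)q_v$ produces exactly the $q_v$-part of the first boundary term and the fourth term of (\ref{erreqn1}).

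It remains to account for the flux $p$, i.e. to show that the two $p$-dependent contributions on the right of (\ref{erreqn1}) cancel, which is precisely what makes the identity valid for \emph{arbitrary} $p\in V_h$. Using the definition (\ref{EQ:001}) of $\nabla_w\cdot p$ and the fact that $\nabla_w\cdot p$ is piecewise constant, I would first write $(\nabla_w\cdot p,w)=(\nabla_w\cdot p,Q_0 w)=\sum_T\sum_{e\subset\pT}\langle p\,\bn_e\cdot\bn,Q_0 w\rangle_e$. Separately, because $w$ is continuous across interior edges and vanishes on $\partial\Omega$, while the outward normal $\bn$ reverses sign across each interior edge and the orientation $\bn_e$ is fixed, the single-valued trace of $w$ forces $\sum_T\sum_{e\subset\pT}\langle p\,\bn_e\cdot\bn,w\rangle_e=0$. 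Writing this zero as $w=(w-Q_0 w)+Q_0 w$ gives $\sum_T\sum_{e\subset\pT}\langle p\,\bn_e\cdot\bn,w-Q_0 w\rangle_e=-(\nabla_w\cdot p,w)$, so the $p$-part of the first boundary term cancels the term $(\nabla_w\cdot p,w)$ exactly. Collecting the surviving terms then reproduces (\ref{erreqn1}).

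The routine computations (integration by parts, projection algebra) are straightforward; the step that requires the most care is the orientation bookkeeping in the cancellation of the $p$-terms, where one must track the interplay between the element-outward normal $\bn$ and the globally fixed edge orientation $\bn_e$ and invoke both the continuity and the homogeneous boundary value of $w$. This is also the only place where the hypotheses on $w$ (continuity and $w\in H_0^1(\Omega)$) and the precise definition (\ref{EQ:001}) of the weak divergence are essential; the piecewise-continuity assumptions on $\alpha$ and $\bbeta$ enter only through the well-definedness of the traces $q_v\cdot\bn$ and $Q_0 q_v$.
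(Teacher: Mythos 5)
Your proposal is correct and follows essentially the same route as the paper's proof: testing the dual problem with $\phi=v$, splitting $q_v=Q_0q_v+(I-Q_0)q_v$, integrating the constant part by parts elementwise, using $\int_{\pT}Q_0q_v\cdot\bn\,ds=0$ and $(\nabla_w\cdot p,Q_0w)=(\nabla_w\cdot p,w)$ together with the vanishing of $\sum_T\sum_{e\subset\pT}\langle p\,\bn_e\cdot\bn,w\rangle_e$, and finally re-splitting $Q_0q_v=q_v+(Q_0-I)q_v$. The only difference is organizational (you handle the $q_v$-traces and the $p$-traces in separate cancellation steps, whereas the paper inserts $p$ into the trace pairing first and subtracts $Q_0w$ from the combined term), which is an equivalent rearrangement of the same identities.
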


\begin{proof}
As $v\in S_h\subset H_0^1(\Omega)$, we have from (\ref{EQ:auxiliary-problem})
\begin{equation}\label{EQ:March08:001}
\begin{split}
(\theta,v)=&(\alpha \nabla v +\bbeta v,\nabla w)\\
=&\sum_{T\in\T_h} (Q_0q_v,\nabla w)_T+\sum_{T\in\T_h} ((I-Q_0)q_v,\nabla w)_T\\
=&\sum_{T\in\T_h}\sum_{e\subset\partial T} \langle Q_0q_v\cdot \bn, w\rangle_e +\sum_{T\in\T_h} ((I-Q_0) q_v,\nabla w)_T\\
=&\sum_{T\in\T_h}\sum_{e\subset\partial T} \langle(Q_0 q_v+p \bn_e) \cdot \bn, w\rangle_e +\sum_{T\in\T_h} ((I-Q_0)q_v,\nabla w)_T,
\end{split}
\end{equation}
where we have used the fact that $\sum_{T\in\T_h}\sum_{e\subset\partial T} \langle p \bn_e\cdot \bn, w\rangle_e=0$ for any $p\in V_h$
in the last equality.
From $\int_{\partial T} Q_0 q_v\cdot \bn ds=0$  we arrive at
\begin{eqnarray*}
\sum_{T\in\T_h}\sum_{e\subset\partial T} \langle(Q_0q_v+p \bn_e) \cdot \bn, Q_0w\rangle_e = (\nabla_w \cdot p, Q_0 w)=(\nabla_w \cdot p, w).
\end{eqnarray*}
Using the above identity in (\ref{EQ:March08:001}) we obtain
\begin{equation}\label{EQ:March08:002}
\begin{split}
(\theta, v)=&\sum_{T\in\T_h}\sum_{e\subset\partial T} \langle(Q_0 q_v+p\bn_e) \cdot \bn, w-Q_0w\rangle_e +\sum_{T\in\T_h} ((I-Q_0)q_v,\nabla w)_T+(\nabla_w\cdot p, w)\\
=&\sum_{T\in\T_h}\sum_{e\subset\partial T} \langle(q_v+p \bn_e) \cdot \bn, w-Q_0w\rangle_e +\sum_{T\in\T_h} ((I-Q_0)q_v,\nabla w)_T\\
&\quad + \sum_{T\in\T_h}\sum_{e\subset\partial T} \langle (Q_0-I)q_v\cdot \bn, w-Q_0w\rangle_e+(\nabla_w\cdot p, w),
\end{split}
\end{equation}
which gives the identity (\ref{erreqn1}).
\end{proof}

\medskip

In the space $S_h\times V_h$, we introduce the following semi-norm
\begin{eqnarray}\label{norm}
\|(v,p)\|_h&=&\left(J_2(v,p)+\|\nabla_w\cdot p\|_{-1}^2\right)^{\frac12},\qquad (v,p)\in S_h\times V_h.
\end{eqnarray}
An application of Lemma \ref{Lemma:GeneralIdentity} shows that $\|(v,p)\|_h$ indeed defines a norm in the discrete space $S_h\times V_h$. More precisely, we have the following result.

\begin{lemma}\label{Lemma:H1NormBound}
Assume that the coefficients $\alpha$ and $\bbeta$ are in $C^0(\T_h)$.  Then for any $v\in S_h$ and $p\in V_h$, we have
\begin{equation}\label{EQ:March09:101}
\|v\|_1 \leq C\left((J_2(v, p)^{1/2}+\|\nabla_w\cdot p\|_{-1}\right),
\end{equation}
provided that the meshsize $h$ is sufficiently small. Consequently, the semi-norm $\|\cdot\|_h$ becomes to be a norm in the space $S_h\times V_h$ when the meshsize is sufficiently small.
\end{lemma}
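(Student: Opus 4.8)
The plan is to estimate $\|v\|_1$ by duality, using the master identity of Lemma~\ref{Lemma:GeneralIdentity}. Since $v\in H_0^1(\Omega)$, we have $\|v\|_1=\sup_{0\neq\theta\in H^{-1}(\Omega)} (\theta,v)/\|\theta\|_{-1}$, so it suffices to bound $(\theta,v)$ for an arbitrary $\theta$. Fix such a $\theta$, let $w=w(\theta)\in H_0^1(\Omega)$ be the solution of the auxiliary problem~\eqref{EQ:auxiliary-problem} with $\|w\|_1\le C\|\theta\|_{-1}$, and insert the given $v$ and $p$ into the identity~\eqref{erreqn1}. The right-hand side splits into two groups: the two ``consistency'' terms $\sum_{T,e}\langle (q_v+p\bn_e)\cdot\bn, w-Q_0w\rangle_e$ and $(\nabla_w\cdot p,w)$, and the two ``oscillation'' terms involving $(I-Q_0)q_v$ on element interiors and on edges.

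For the consistency terms I would proceed as follows. The standard trace/approximation estimate $\|w-Q_0w\|_{L^2(e)}\le C h_T^{1/2}\|\nabla w\|_{L^2(T)}$, valid on a shape-regular mesh, together with Cauchy--Schwarz over the index set $\{(T,e)\}$, bounds the first term by $C\big(\sum_{T,e}h_T\|q_v\cdot\bn_e+p\|_{L^2(e)}^2\big)^{1/2}\|\nabla w\|_{0}\le C\,J_2(v,p)^{1/2}\|\theta\|_{-1}$, recognizing the sum as $2J_2(v,p)$. The term $(\nabla_w\cdot p,w)$ is bounded directly by $\|\nabla_w\cdot p\|_{-1}\|w\|_1\le C\|\nabla_w\cdot p\|_{-1}\|\theta\|_{-1}$. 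Together these give the targeted upper bound $C\big(J_2(v,p)^{1/2}+\|\nabla_w\cdot p\|_{-1}\big)\|\theta\|_{-1}$.

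The heart of the argument, and the step I expect to be the main obstacle, is controlling the oscillation terms using only the assumed $C^0(\T_h)$ regularity, which forbids differentiating $\alpha$ or $\bbeta$. The key point is that $(I-Q_0)q_v$ always carries a small factor. Writing $q_v=\alpha\nabla v+\bbeta v$ and using that $\nabla v$ is elementwise constant, one has $(I-Q_0)(\alpha\nabla v)=(\alpha-Q_0\alpha)\nabla v$, which is pointwise bounded by $\osc(\alpha,\T_h)|\nabla v|$; for the convection part I would split $(I-Q_0)(\bbeta v)=\bbeta_T(I-Q_0)v+(I-Q_0)\big((\bbeta-\bbeta_T)v\big)$ with $\bbeta_T=Q_0\bbeta$, bounding the first piece by $C h_T\|\nabla v\|_{L^2(T)}$ via Poincar\'e and the second by $C\,\osc(\bbeta,\T_h)\|v\|_{L^2(T)}$. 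On the edges one additionally invokes the trace and inverse inequalities, where the powers $h_T^{\pm 1/2}$ coming from $\nabla v$ on $e$ and from $w-Q_0w$ on $e$ cancel, so that no extra negative power of $h$ survives. Summing and applying Cauchy--Schwarz, both oscillation terms are bounded by $C\big(\osc(\alpha,\T_h)+\osc(\bbeta,\T_h)+h\big)\|v\|_1\|w\|_1\le C\big(\osc(\alpha,\T_h)+\osc(\bbeta,\T_h)+h\big)\|v\|_1\|\theta\|_{-1}$, the crucial feature being that the factor multiplying $\|v\|_1$ is small.

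Combining the two groups, dividing by $\|\theta\|_{-1}$ and taking the supremum over $\theta$ yields $\|v\|_1\le C\big(J_2(v,p)^{1/2}+\|\nabla_w\cdot p\|_{-1}\big)+C\big(\osc(\alpha,\T_h)+\osc(\bbeta,\T_h)+h\big)\|v\|_1$. Because $\alpha,\bbeta\in C^0(\T_h)$, the oscillations tend to $0$ as $h\to0$, so for $h$ sufficiently small the factor multiplying $\|v\|_1$ on the right is at most $1/2$ and can be absorbed into the left-hand side, giving~\eqref{EQ:March09:101}. Finally, to see that $\|\cdot\|_h$ defined in~\eqref{norm} is a genuine norm it suffices to check positivity: if $\|(v,p)\|_h=0$ then $J_2(v,p)=0$ and $\|\nabla_w\cdot p\|_{-1}=0$, whence $\|v\|_1=0$ by~\eqref{EQ:March09:101} and thus $v=0$; feeding $v=0$ back into $J_2(v,p)=0$ forces $p=0$ on every edge, so $(v,p)=(0,0)$.
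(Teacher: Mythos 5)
Your proposal is correct and follows essentially the same route as the paper's own proof: both estimate $(\theta,v)$ via the identity of Lemma~\ref{Lemma:GeneralIdentity} with the dual solution $w$ of \eqref{EQ:auxiliary-problem}, bound the consistency terms by $J_2(v,p)^{1/2}\|\theta\|_{-1}$ and $\|\nabla_w\cdot p\|_{-1}\|\theta\|_{-1}$, control the $(I-Q_0)q_v$ terms by the same oscillation decomposition $(\alpha-\overline{\alpha})\nabla v+\overline{\bbeta}(v-\overline{v})+(\bbeta-\overline{\bbeta})v$ with matching $h_T^{\pm1/2}$ trace/inverse factors on edges, and absorb the resulting $C(\osc(\alpha,\T_h)+\osc(\bbeta,\T_h)+h)\|v\|_1$ term for small $h$. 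Your concluding positivity argument for $\|\cdot\|_h$ (deduce $v=0$ from \eqref{EQ:March09:101}, then $p=0$ edgewise from $J_2(0,p)=0$) is likewise the paper's argument.
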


\begin{proof}
Using the identity (\ref{erreqn1}) we have
\begin{equation}\label{EQ:March09:001}
\begin{split}
|(\theta, v)|\leq & \sum_{T\in\T_h}\sum_{e\subset\partial T} |\langle(q_v + p \bn_e) \cdot \bn, w-Q_0w\rangle_e| +\|\nabla_d\cdot p\|_{-1}\|w\|_1\\
 & +\sum_{T\in\T_h} |((I-Q_0)q_v,\nabla w))_T| + \sum_{T\in\T_h}\sum_{e\subset\partial T} |\langle(Q_0-I)q_v\cdot \bn, w-Q_0w\rangle_e |\\
\leq & \sum_{T\in\T_h}\sum_{e\subset\partial T} \|(q_v + p \bn_e) \cdot \bn\|_e\| w-Q_0w\|_e +\|\nabla_d\cdot p\|_{-1}\|w\|_1\\
 & +\sum_{T\in\T_h} \|(I-Q_0)q_v\|_T \|\nabla w\|_T + \sum_{T\in\T_h}\sum_{e\subset\partial T} \|(Q_0-I)q_v\cdot \bn\|_e \|w-Q_0w\|_e.
\end{split}
\end{equation}

Each term on the right-hand side of (\ref{EQ:March09:001}) can be handled respectively as follows. From the Cauchy-Schwarz inequality and the trace inequality, we may estimate the first term by
\begin{equation}\label{EQ:August22:001}
\begin{split}
&\sum_{T\in\T_h}\sum_{e\subset\partial T} \|(\alpha\nabla v + p \bn_e) \cdot \bn\|_{e} \|w-Q_0w\|_e\\
\leq & C\left(\sum_{T\in\T_h} h_T\|(\alpha\nabla v + p \bn_e) \cdot \bn\|_{\partial T}^2\right)^{1/2} (h_T^{-1}\|w-Q_0w\|_{0,T} +\|\nabla w\|_{0,T})\\
\leq & C J_2(v,p)^{1/2} \|\nabla w\|_0.
\end{split}
\end{equation}

Next, observe that, on each element $T$, $(I-Q_0) q_v$ may be rewritten as
\begin{equation}\label{EQ:qv-decomposition}
(I-Q_0) q_v = (I-Q_0) \left( (\alpha - \overline{\alpha})\nabla v
+ \bbeta v - \overline{\bbeta}\overline{v}
\right),
\end{equation}
where $\overline{\sigma}$ stands for the cell average of the underlying function $\sigma$ over the element $T$. It follows that
\begin{equation*}
  \begin{split}
\|(I-Q_0) q_v\|_{0,T} & \leq \| (\alpha - \overline{\alpha})\nabla v +
\bbeta v - \overline{\bbeta}\overline{v}\|_{0,T}\\
& \leq \| (\alpha - \overline{\alpha})\nabla v\|_{0,T} +
\|\overline{\bbeta} (v-\overline{v})\|_{0,T} + \|(\bbeta -\overline{\bbeta}) v\|_{0,T}\\
& \leq \osc(\alpha, T) \|\nabla v\|_{0,T} + \osc(\bbeta, T)\|v\|_{0,T}
+ C h_T \|\nabla v\|_{0,T}\\
& \leq (\osc(\alpha,T) + \osc(\bbeta,T) + Ch_T)\left(\|\nabla v\|_{0,T} + \|v\|_{0,T}\right),
\end{split}
\end{equation*}
Thus, the third term has the following estimate:
\begin{equation}\label{EQ:August22:002}
\begin{split}
\sum_{T\in\T_h} \|(I-Q_0)q_v\|_T \|\nabla w\|_T  \leq & \sum_{T\in\T_h} (\osc(\alpha,T) +\osc(\bbeta, T)+Ch_T)\left(\|\nabla v\|_{0,T} + \|v\|_{0,T}\right)\|\nabla w\|_T \\
\leq & (\osc(\alpha, \T_h)+\osc(\bbeta,\T_h)+Ch) \|v\|_1 \|w\|_1.
\end{split}
\end{equation}

As for the fourth term, we again use the decomposition (\ref{EQ:qv-decomposition}) to arrive at
\begin{equation}\label{EQ:August22:004}
\begin{split}
& \sum_{T\in\T_h}\sum_{e\subset\partial T} \|(Q_0-I)q_v\cdot \bn\|_e \|w-Q_0w\|_e\\
\leq & \sum_{T\in\T_h}\sum_{e\subset\partial T} \|(I-Q_0) \left( (\alpha - \overline{\alpha})\nabla v
+ \bbeta v - \overline{\bbeta}\overline{v}
\right)\|_e \|w-Q_0w\|_e.
\end{split}
\end{equation}
For any function $\sigma$ defined on $T\in \T_h$, from the trace and inverse inequality we have
$$
\|Q_0\sigma\|_e \leq C (h^{-1} \|Q_0\sigma\|_{0,T}^2 + h \|\nabla(Q_0\sigma)\|_{0,T}^2) \leq C h^{-1} \|Q_0\sigma\|_{0,T}^2.
$$
It follows that
\begin{eqnarray*}
& &\|(I-Q_0) \left( (\alpha - \overline{\alpha})\nabla v
+ \bbeta v - \overline{\bbeta}\overline{v}
\right)\|_e \\
&\leq & \|(\alpha - \overline{\alpha})\nabla v
+ \bbeta v - \overline{\bbeta}\overline{v}\|_e + \|Q_0\left( (\alpha - \overline{\alpha})\nabla v
+ \bbeta v - \overline{\bbeta}\overline{v}\right)\|_e\\
& \leq & \|(\alpha - \overline{\alpha})\nabla v
+ \bbeta v - \overline{\bbeta}\overline{v}\|_e + Ch^{-1/2} \|Q_0 \left( (\alpha - \overline{\alpha})\nabla v
+ \bbeta v - \overline{\bbeta}\overline{v}\right)\|_{0,T}\\
& \leq & \|(\alpha - \overline{\alpha})\nabla v
+ \bbeta v - \overline{\bbeta}\overline{v}\|_e + Ch^{-1/2} \|(\alpha - \overline{\alpha})\nabla v
+ \bbeta v - \overline{\bbeta}\overline{v}\|_{0,T}\\
& \leq & (\osc(\alpha,T)+\osc(\bbeta,T)+Ch_T) (\|\nabla v\|_e + \|v\|_e)
 + Ch^{-1/2} \|(\alpha - \overline{\alpha})\nabla v
+ \bbeta v - \overline{\bbeta}\overline{v}\|_{0,T}\\
& \leq & C(\osc(\alpha,T)+\osc(\bbeta,T)+h_T)h^{-1/2} (\|\nabla v\|_T + \|v\|_T).
\end{eqnarray*}
Substituting the above estimate into (\ref{EQ:August22:004}) yields
\begin{equation}\label{EQ:August22:006}
\begin{split}
& \sum_{T\in\T_h}\sum_{e\subset\partial T} \|(Q_0-I)q_v\cdot \bn\|_e \|w-Q_0w\|_e\\
\leq & C \sum_{T\in\T_h} (\osc(\alpha,T)+\osc(\bbeta,T)+h_T)h^{-1/2} (\|\nabla v\|_T + \|v\|_T)
\|w-Q_0w\|_\pT\\
\leq & C (\osc(\alpha,\T_h)+\osc(\bbeta, \T_h)+h) \|v\|_1 \|w\|_1.
\end{split}
\end{equation}

Now by combining (\ref{EQ:March09:001}) with (\ref{EQ:August22:001}),  (\ref{EQ:August22:002}), and (\ref{EQ:August22:006}) we arrive at
\begin{equation}
\begin{split}
(\theta, v) & \leq C J_2(v,p)^{1/2} \|\nabla w\|_0 + \|\nabla_w\cdot p\|_{-1}\|w\|_1 + C (\osc(\alpha,\T_h)+\osc(\bbeta,\T_h)+h) \|v\|_1 \|w\|_1\\
&\leq C (J_2(v,p)^{1/2} + \|\nabla_w\cdot p\|_{-1} + (\osc(\alpha,\T_h)+\osc(\bbeta,\T_h)+h) \|v\|_1)\|\theta\|_{-1}.
\end{split}
\end{equation}
It follows that
$$
\|v\|_1 \leq C \left(J_2(v,p)^{1/2} + \|\nabla_w\cdot p\|_{-1} + (\osc(\alpha,\T_h)+\osc(\bbeta,\T_h)+h) \|v\|_1\right),
$$
which, with the assumption of $\alpha\in C^0(\T_h)$ and $\bbeta\in C^0(\T_h)$, implies the estimate (\ref{EQ:March09:101}) for sufficiently small meshsize $h$.

To show that $\|\cdot\|_h$ defines a norm in $S_h\times V_h$, for any $(v,p)\in S_h\times V_h$ satisfying $\|(v,p)\|_{h}=0$ we have $J_2(v,p)=0$ and $\nabla_w\cdot p=0$. It then follows from (\ref{EQ:March09:101}) that $v=0$. From $S(v,p)=0$ and $v=0$, we further have $p=p+\alpha \nabla v =0$ on each edge $e\in \E_h$. This completes the proof of the lemma.
\end{proof}

\medskip

In the space $W_h$, we introduce a discrete $H^1$ norm as follows
$$
\|\sigma\|_{1,h}=\left( \sum_{e\in\E_h}  [\![ \sigma]\!]_e^2 \right)^{1/2},
$$
where $[\![ \sigma]\!]_e = \sigma|_{T_L} - \sigma|_{T_R}$ is the jump of the piecewise constant function $\sigma$ on the edge $e\in \E_h$ shared by two elements $T_L$ and $T_R$. For $e$ on the boundary $\partial\Omega$, we assume $T_R$ is empty so that a one-sided trace of $\sigma$ will be taken as the value of $[\![ \sigma]\!]_e$.

Next, we equip the space $V_h$ with the following $L^2$-norm:
$$
\|p\|_0=\left( \sum_{e\in\E_h} h_e\int_e p^2 ds \right)^{1/2}.
$$

\begin{lemma}\label{infsuplm}
For any $\sigma\in W_h$, there exists a discrete flux $p_\sigma\in V_h$ such that
$$
(\nabla_w\cdot p_\sigma,\sigma)=\|\sigma\|_{1,h}^2,\qquad \|p_\sigma\|_0 \leq C \|\sigma\|_{1,h}.
$$
\end{lemma}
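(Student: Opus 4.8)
The statement is an inf-sup (LBB) type construction: given a piecewise constant $\sigma\in W_h$, I must explicitly build a flux $p_\sigma\in V_h$ whose weak divergence pairs with $\sigma$ to reproduce the discrete $H^1$ seminorm $\|\sigma\|_{1,h}^2=\sum_{e\in\E_h}[\![\sigma]\!]_e^2$, while keeping $\|p_\sigma\|_0$ controlled by $\|\sigma\|_{1,h}$. The natural choice is to define $p_\sigma$ edge-by-edge proportional to the jump of $\sigma$ across that edge, since the weak divergence $(\nabla_w\cdot p)|_T=\frac{1}{|T|}\int_{\partial T}p\,\bn\cdot\bn_e\,ds$ is precisely an edge-summation of $p$ weighted by the orientation. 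So the construction should make the pairing telescope into a sum of squared jumps.

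Let me verify the pairing.The key observation is that the pairing $(\nabla_w\cdot p,\sigma)$ between a discrete flux $p\in V_h$ and a piecewise constant $\sigma\in W_h$ telescopes into a single sum over edges. Indeed, writing $p_e$ for the constant value of $p$ on $e$ and $\sigma_T$ for the value of $\sigma$ on $T$, the definition \eqref{EQ:001} gives
$$
(\nabla_w\cdot p,\sigma)=\sum_{T\in\T_h}\sigma_T\sum_{e\subset\pT}p_e\,(\bn\cdot\bn_e)\,|e|.
$$
For an interior edge $e$ shared by $T_L$ and $T_R$ the outward normal agrees with $\bn_e$ from one side and opposes it from the other, so the two element contributions combine to $p_e|e|(\sigma_{T_L}-\sigma_{T_R})=p_e|e|\,[\![\sigma]\!]_e$; boundary edges yield the same expression under the one-sided convention for $[\![\sigma]\!]_e$. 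Hence, with a consistent orientation,
$$
(\nabla_w\cdot p,\sigma)=\sum_{e\in\E_h}p_e\,|e|\,[\![\sigma]\!]_e,\qquad \forall p\in V_h.
$$

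Given this identity, the construction of $p_\sigma$ is immediate: I define it edgewise by
$$
p_\sigma|_e:=\frac{[\![\sigma]\!]_e}{h_e},\qquad e\in\E_h,
$$
which is a legitimate element of $V_h$ since it is constant on each edge. Substituting into the pairing identity and using that for a straight edge in two dimensions $h_e=\diam(e)=|e|$, I get
$$
(\nabla_w\cdot p_\sigma,\sigma)=\sum_{e\in\E_h}\frac{[\![\sigma]\!]_e}{h_e}\,|e|\,[\![\sigma]\!]_e
=\sum_{e\in\E_h}[\![\sigma]\!]_e^2=\|\sigma\|_{1,h}^2,
$$
which is exactly the first claimed property.

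For the stability bound I compute directly from the definition of $\|\cdot\|_0$, again using $h_e=|e|$:
$$
\|p_\sigma\|_0^2=\sum_{e\in\E_h}h_e\int_e p_\sigma^2\,ds
=\sum_{e\in\E_h}h_e\,|e|\,\frac{[\![\sigma]\!]_e^2}{h_e^2}
=\sum_{e\in\E_h}\frac{|e|}{h_e}[\![\sigma]\!]_e^2
=\|\sigma\|_{1,h}^2,
$$
so in fact $\|p_\sigma\|_0=\|\sigma\|_{1,h}$ and the constant $C=1$ suffices. The proof is therefore constructive and the inf-sup bound is sharp. There is no genuine analytic obstacle here; the only point requiring care is the bookkeeping of the sign convention, namely that the labelling of $T_L,T_R$ in the jump $[\![\sigma]\!]_e$ be chosen consistently with the fixed orientation $\bn_e$ so that the element contributions across each edge combine with the correct sign. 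Reversing the orientation flips both $p_\sigma|_e$ and the pairing simultaneously, so the identities are unaffected.
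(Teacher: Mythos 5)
Your proof is correct and follows essentially the same route as the paper: you derive the same edge-telescoping identity $(\nabla_w\cdot p,\sigma)=\sum_{e\in\E_h}\int_e [\![\sigma]\!]_e\, p|_e\, ds$, make the identical choice $p_\sigma|_e=[\![\sigma]\!]_e/h_e$, and verify both properties by the same direct computation (the paper likewise obtains the sharp equality $\|p_\sigma\|_0=\|\sigma\|_{1,h}$). Your added remarks on the orientation bookkeeping and on $h_e=|e|$ for straight edges are harmless clarifications of steps the paper leaves implicit.
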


\begin{proof} From the computational formula (\ref{EQ:001}) for $\nabla_w\cdot p$, we have
\begin{eqnarray*}
(\nabla_w\cdot p, \sigma) &=&\sum_T \int_{e\subset \partial T} p|_e \bn_e\cdot\bn\sigma ds \\
&=& \sum_{e\in\E_h} \int_e [\![ \sigma]\!]_e p|_e ds,
\end{eqnarray*}
By choosing $p_\sigma|_e =  [\![ \sigma]\!]_e / h_e$ we arrive at
$$
(\nabla_w\cdot p_\sigma),\sigma) = \sum_{e\in\E_h}  [\![ \sigma]\!]_e^2 = \|\sigma\|_{1,h}^2.
$$
Furthermore, it is easy to see that
$$
\|p_\sigma\|_0^2 = \sum_{e\in\E_h} h_e\int_e p_\sigma^2 ds = \sum_{e\in\E_h}
[\![ \sigma]\!]_e^2 = \|\sigma\|_{1,h}^2.
$$
This completes the proof of the lemma.
\end{proof}

\begin{theorem}
Assume that the coefficients $\alpha$ and $\bbeta$ are in $C^0(\T_h)$. The numerical scheme \eqref{min-LagrangeForm2-01}-\eqref{min-LagrangeForm2-02} has one and only one solution for $u_h$, $q_h$, and $\lambda_h$, provided that the meshsize $h$ is sufficiently small.
\end{theorem}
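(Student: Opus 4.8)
The plan is to exploit the square, finite-dimensional structure of the linear system \eqref{min-LagrangeForm2-01}-\eqref{min-LagrangeForm2-02} together with the three lemmas already established. The constraint \eqref{min-LagrangeForm2-02} is posed over $W_h$, so the multiplier space $M_h$ may be identified with $W_h$; the number of scalar unknowns $(u_h,q_h,\lambda_h)$ then equals the number of equations obtained by testing \eqref{min-LagrangeForm2-01} over $S_h\times V_h$ and \eqref{min-LagrangeForm2-02} over $W_h$. For such a square system, injectivity is equivalent to invertibility, so it suffices to show that the homogeneous problem, obtained by setting $f=0$, admits only the trivial solution $(u_h,q_h,\lambda_h)=(0,0,0)$; existence and uniqueness for arbitrary $f$ follow at once.

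First I would take $f=0$ in \eqref{min-LagrangeForm2-02}. Since $\nabla_w\cdot q_h\in W_h$ and it is $L^2$-orthogonal to every $w\in W_h$, this forces $\nabla_w\cdot q_h=0$. Next I would test \eqref{min-LagrangeForm2-01} with the special direction $(v;p)=(u_h;q_h)$. By the previous step the coupling term $(\nabla_w\cdot q_h,\lambda_h)$ vanishes, so $s_h((u_h;q_h),(u_h;q_h))=0$; recalling from the definition \eqref{EQ:FDerivative2} that $s_h((v;p),(v;p))=2J_2(v,p)$, this yields $J_2(u_h,q_h)=0$. Combining $J_2(u_h,q_h)=0$ with $\nabla_w\cdot q_h=0$ gives $\|(u_h,q_h)\|_h=0$, and Lemma \ref{Lemma:H1NormBound} (valid for sufficiently small $h$ under the standing assumption $\alpha,\bbeta\in C^0(\T_h)$) guarantees that $\|\cdot\|_h$ is a genuine norm, whence $u_h=0$ and $q_h=0$.

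It then remains to recover $\lambda_h=0$. With $u_h=0$ and $q_h=0$, equation \eqref{min-LagrangeForm2-01} collapses to $(\nabla_w\cdot p,\lambda_h)=0$ for all $p\in V_h$. Here Lemma \ref{infsuplm} does the work: choosing $p=p_{\lambda_h}$ produces $\|\lambda_h\|_{1,h}^2=(\nabla_w\cdot p_{\lambda_h},\lambda_h)=0$. Finally, the boundary convention in the definition of $\|\cdot\|_{1,h}$—treating a boundary edge with an empty $T_R$ so that $[\![ \lambda_h]\!]_e$ equals the one-sided trace—ensures that $\|\cdot\|_{1,h}$ separates points on $W_h$: the vanishing interior jumps force $\lambda_h$ to be globally constant across the connected partition, and the vanishing boundary jumps force that constant to be zero. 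Hence $\lambda_h=0$, which completes the uniqueness, and therefore the well-posedness, argument.

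The only delicate point, and the step I would guard most carefully, is the claim that $\|\cdot\|_h$ is an honest norm rather than a mere seminorm: this is exactly where the smallness of $h$ and the piecewise continuity of the coefficients enter, through the $\|v\|_1$-control provided by Lemma \ref{Lemma:H1NormBound}, and it is what rules out a nonzero $(u_h,q_h)$ with $J_2(u_h,q_h)=0$ and $\nabla_w\cdot q_h=0$. One could equally recast the argument in the Brezzi saddle-point framework, since coercivity of $s_h$ on $\ker(b)=\{(v;p):\nabla_w\cdot p=0\}$ follows from $s_h((v;p),(v;p))=2J_2(v,p)=\|(v;p)\|_h^2$ there, and the inf-sup condition for $b((v;p),w)=(\nabla_w\cdot p,w)$ follows directly from Lemma \ref{infsuplm}; but for this square finite-dimensional system the direct uniqueness argument above is the most economical route.
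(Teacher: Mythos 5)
Your proposal is correct and follows essentially the same route as the paper's proof: reduce to the homogeneous system, use $f=0$ to get $\nabla_w\cdot q_h=0$, test \eqref{min-LagrangeForm2-01} with $(u_h;q_h)$ to obtain $J_2(u_h,q_h)=0$, invoke Lemma \ref{Lemma:H1NormBound} (norm property of $\|\cdot\|_h$ for small $h$) to conclude $u_h=0$, $q_h=0$, and then use Lemma \ref{infsuplm} together with the boundary-edge jump convention to force $\lambda_h\equiv 0$. Your added observations --- the square-system equivalence of injectivity and invertibility, the factor $s_h((v;p),(v;p))=2J_2(v,p)$, and the optional Brezzi saddle-point recasting --- are accurate refinements of the same argument, not a different approach.
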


\begin{proof}
It suffices to show that the homogeneous problem has only trivial solution. To this end, let $u_h\in S_h$, $q_h\in V_h$, and $\lambda_h\in W_h$ be the solution of \eqref{min-LagrangeForm2-01}-\eqref{min-LagrangeForm2-02} with homogeneous data $f=0$. From the equation \eqref{min-LagrangeForm2-02} we have $\nabla_w \cdot q_h=0$ on each element $T\in\T_h$. Next, by choosing $(v,p)=(u_h, q_h)$ in \eqref{min-LagrangeForm2-01} and using $(\nabla_w\cdot q_h,\lambda_h)=0$ we obtain
$$
J_2(u_h,q_h)=0,
$$
which, together with $\nabla_w\cdot q_h=0$, leads to $\|(u_h,q_h)\|_h=0$ by using (\ref{norm}). It follows that $u_h=0$ and $q_h=0$. Thus, from \eqref{min-LagrangeForm2-01} we obtain
$$
(\nabla_w\cdot p, \lambda_h) = 0,\qquad \forall p\in V_h.
$$
Next, from the {\em inf-sup} result of Lemma \ref{infsuplm} there exists a discrete flux $p_{\lambda_h}\in V_h$ satisfying
$$
\|\lambda_h\|_{1,h}^2 = (\nabla_w\cdot p_{\lambda_h}, \lambda_h)=0,
$$
which gives rise to $[\![\lambda_h]\!]=0$ on each edge $e\in \E_h$ (including the boundary edge), and hence $\lambda_h\equiv 0$. This completes the proof of the theorem.
\end{proof}

\section{Error Estimates}\label{sectionEE}
We are now in a position to establish some error estimates for the approximate solutions $u_h$ and $q_h$ arising from the scheme (\ref{min-LagrangeForm2-01})-(\ref{min-LagrangeForm2-02}).

\begin{theorem}\label{thm:h1}
Assume that the coefficients of the model problem (\ref{ellipticbdy}) are uniformly piecewise continuous; i.e., $\alpha\in C^0(\T_h)$ and $\bbeta\in C^0(\T_h)$. Let $u$ be the exact solutions of (\ref{ellipticbdy})-(\ref{ellipticbc}), $u_h\in S_h$, $q_h\in V_h$, and $\lambda_h\in W_h$ be the approximate solution arising from the numerical scheme (\ref{min-LagrangeForm2-01})-(\ref{min-LagrangeForm2-02}). Assume $u\in H^2(\Omega)$, then the following error estimate holds true
\begin{equation}\label{h1}
\|u-u_h\|_1\le C (\|\alpha\|_\infty h \|u\|_2 + \|\bbeta\|_\infty h \|u\|_1 + \osc(\alpha, \T_h) \|u\|_1 + \osc(\bbeta,\T_h) \|u\|_0).
\end{equation}
In particular, if the coefficients $\alpha$ and $\bbeta$ are differentiable on each element $T\in \T_h$, then
\begin{equation}\label{h1-02}
\|u-u_h\|_1\le C h \|u\|_2,
\end{equation}
where $C=C(\alpha,\bbeta)$ is a constant depending on the element-wise $W^{1,\infty}$ norm of the coefficients $\alpha$ and $\bbeta$.
\end{theorem}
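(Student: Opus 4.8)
The plan is to compare $u_h$ against the $P_1$ nodal interpolant $u_I\in S_h$ of the exact solution $u$, and to introduce a companion discrete flux $q_I\in V_h$ carrying the \emph{exact} normal flux in an averaged sense. Writing $\pi_e\sigma=\tfrac{1}{|e|}\int_e\sigma\,ds$ for the edge average and $\bq=-(\alpha\nabla u+\bbeta u)$ for the exact flux, I would set $q_I|_e=\pi_e(\bq\cdot\bn_e)$ for each $e\in\E_h$. The crucial structural fact is that $q_I$ is \emph{admissible}: since $\nabla\cdot\bq=f$, the divergence theorem yields on each $T\in\T_h$ that $(\nabla_w\cdot q_I)|_T=\frac{1}{|T|}\int_{\partial T}\bq\cdot\bn\,ds=\frac{1}{|T|}\int_T f\,dx=(Q_hf)|_T$, whence $\nabla_w\cdot q_I=Q_hf=\nabla_w\cdot q_h$ and therefore $\nabla_w\cdot(q_I-q_h)=0$. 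Splitting $u-u_h=(u-u_I)+(u_I-u_h)$, the interpolation term obeys the standard bound $\|u-u_I\|_1\le Ch\|u\|_2$, so it remains to control $\|u_I-u_h\|_1$.

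For this I would apply Lemma \ref{Lemma:H1NormBound} to the pair $(u_I-u_h,\,q_I-q_h)\in S_h\times V_h$, giving
\[
\|u_I-u_h\|_1\le C\Big(J_2\big(u_I-u_h,\,q_I-q_h\big)^{1/2}+\|\nabla_w\cdot(q_I-q_h)\|_{-1}\Big),
\]
in which the second term vanishes by admissibility, reducing everything to a bound on $J_2(u_I-u_h,\,q_I-q_h)$. Here I would exploit the Euler--Lagrange equation (\ref{min-LagrangeForm2-01}): testing it with $(v;p)=(u_I-u_h;\,q_I-q_h)$ and using $\nabla_w\cdot(q_I-q_h)=0$ kills the multiplier term and produces the Galerkin orthogonality $s_h\big((u_h;q_h),(u_I-u_h;q_I-q_h)\big)=0$. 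Since $s_h(\cdot,\cdot)$ is a symmetric positive semidefinite bilinear form with $s_h((v;p),(v;p))=2J_2(v,p)$, bilinearity together with the Cauchy--Schwarz inequality for $s_h$ yields $J_2(u_I-u_h,\,q_I-q_h)\le J_2(u_I,q_I)$. This is just the quasi-optimality (best-approximation) argument phrased in the metric $J_2$.

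The remaining and genuinely technical step is the consistency estimate of $J_2(u_I,q_I)$, which is where I expect the main work. On an edge $e\subset\partial T$, using $\bq\cdot\bn_e=-(\alpha^*\nabla u+\bbeta^* u)\cdot\bn_e$ from the $T$-side traces, the edge residual becomes
\[
q_I+\alpha^*\nabla u_I\cdot\bn_e+\bbeta^* u_I\cdot\bn_e
=\big(\alpha^*\nabla u_I-\pi_e(\alpha^*\nabla u)\big)\cdot\bn_e+\big(\bbeta^* u_I-\pi_e(\bbeta^* u)\big)\cdot\bn_e .
\]
I would estimate the diffusion and convection parts separately, and within each split off the cell average $\overline{\alpha},\overline{\bbeta}$ over $T$. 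The averaged-coefficient pieces $\overline{\alpha}\,(\nabla u_I-\pi_e\nabla u)\cdot\bn_e$ and $\overline{\bbeta}\,(u_I-\pi_e u)\cdot\bn_e$ are pure approximation errors: combining the trace inequality, the $P_1$ interpolation bounds for $u_I$, and a Poincar\'e estimate for the deviation of $\nabla u$ (resp.\ $u$) from its edge average, then weighting by $h_T$ as in $J_2$, these contribute $C\|\alpha\|_\infty h\|u\|_2$ and $C\|\bbeta\|_\infty h\|u\|_1$. The remaining pieces carry the factors $\alpha^*-\overline{\alpha}$ and $\bbeta^*-\overline{\bbeta}$, bounded pointwise by $\osc(\alpha,T)$ and $\osc(\bbeta,T)$; after the same trace/weighting manipulation they contribute $C\,\osc(\alpha,\T_h)\|u\|_1$ and $C\,\osc(\bbeta,\T_h)\|u\|_0$. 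Summing the four contributions gives $J_2(u_I,q_I)^{1/2}\le C(\|\alpha\|_\infty h\|u\|_2+\|\bbeta\|_\infty h\|u\|_1+\osc(\alpha,\T_h)\|u\|_1+\osc(\bbeta,\T_h)\|u\|_0)$, and chaining back through the two reductions above (absorbing the $O(h)$ interpolation term into the first summand, which is legitimate since $\|\alpha\|_\infty\ge\alpha_0>0$) establishes (\ref{h1}). Finally (\ref{h1-02}) is immediate: when $\alpha,\bbeta$ are elementwise $W^{1,\infty}$ one has $\osc(\alpha,\T_h)\le Ch\|\nabla\alpha\|_\infty$ and $\osc(\bbeta,\T_h)\le Ch\|\nabla\bbeta\|_\infty$, so every term on the right of (\ref{h1}) is $O(h)$ and bounded by $Ch\|u\|_2$ with $C=C(\alpha,\bbeta)$.
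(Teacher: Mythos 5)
Your proposal is correct and follows essentially the same route as the paper: your $q_I$ (the edge average of the exact normal flux) is precisely the paper's $Q_b q$, your Galerkin-orthogonality/quasi-optimality step in the $J_2$ metric is the paper's error-equation argument yielding $J_2(e_h,\eta_h)\le J_2(u_I,Q_b q)$, and your consistency estimate with the cell-average splitting of $\alpha$ and $\bbeta$ mirrors the paper's treatment, invoking Lemma \ref{Lemma:H1NormBound} in the same way. The only differences are cosmetic: you verify the admissibility of $q_I$ and the final triangle inequality explicitly (the paper leaves both implicit), and your edge-average splitting is if anything slightly more careful than the paper's corresponding step.
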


\begin{proof}
Let $u_I$ be the usual nodal point interpolation of the exact solution $u$. Denote by $Q_b q$ the piecewise constant interpolation of the exact flux $q=-(\alpha\nabla u +\bbeta u)\cdot \bn_e$ in the finite element space $V_h$. Let the error functions be given by
$$
e_h = u_h - u_I,\quad \eta_h=q_h-Q_b q,\quad \xi_h=\lambda_h - 0.
$$
It is not hard to see that $\nabla_w\cdot \eta_h =0$, and from the estimate (\ref{EQ:March09:101}) we have
\begin{equation}\label{EQ:March01:102}
\|e_h\|_1 \leq C J_2(e_h, \eta_h)^{\frac{1}{2}}.
\end{equation}
Thus, it suffices to derive an estimate for $J_2(e_h, \epsilon_h)$. To this end, we observe that the triplet $(u_I, Q_b q, \lambda_I=0)$ satisfies
\begin{eqnarray}
s_h((u_I,Q_b q), (v,p))+ (\nabla_w\cdot p,\lambda_I) &=& s_h((u_I,Q_b q), (v,p)), \qquad \forall v\in S_h, p\in V_h,\label{Lagrangem11-new}\\
(\nabla_w\cdot (Q_b q),w) &=& (f,w),\qquad \forall w\in W_h.\label{Lagrangem12-new}
\end{eqnarray}
By subtracting (\ref{Lagrangem11-new}) from (\ref{min-LagrangeForm2-01}) and (\ref{Lagrangem12-new}) from (\ref{min-LagrangeForm2-02}), we arrive at the following error equations
\begin{eqnarray}
s_h((e_h,\eta_h), (v,p))+ (\nabla_w\cdot p,\xi_h) &=& -s_h((u_I,Q_b q), (v,p)),\qquad \forall v\in S_h, \ p\in V_h, \label{Lagrangem11-newnew}\\
(\nabla_w\cdot \eta, w) &=& 0\qquad \forall w\in W_h.\label{Lagrangem12-newnew}
\end{eqnarray}
By letting $(v,p)=(e_h, \eta_h)$ in (\ref{Lagrangem11-newnew}) we further obtain
$$
J_2(e_h,\eta_h) = - s_h((u_I,Q_b q), (e_h,\eta_h)).
$$
Using the Cauchy-Schwarz inequality to the right-hand side of the above equality gives
\begin{equation}\label{0325-01}
J_2(e_h,\eta_h) \leq  J_2(u_I,Q_b q).
\end{equation}

To estimate the right-hand side of (\ref{0325-01}), we use the definition of $J_2(u_I,Q_b q)$ to obtain
\begin{equation}\label{EQ:March09:201}
\begin{split}
J_2(u_I,Q_b q)  = & \sum_{T\in\T_h}\sum_{e\subset\partial T}h_T \|(\alpha\nabla u_I +\bbeta u_I)\cdot \bn_e + Q_b q \|_e^2\\
= & \sum_{T\in\T_h}\sum_{e\subset\partial T}h_T \|\alpha\nabla u_I\cdot\bn_e  - Q_{b}(\alpha\nabla u \cdot\bn_e ) + \bbeta u_I \cdot \bn_e - Q_b ( \bbeta u \cdot \bn_e)\|_e^2 \\
\le & 2 \sum_{T\in\T_h} h_T \left(\|\alpha\nabla u_I - Q_{b}(\alpha\nabla u)\|_\pT^2 + \|\bbeta u_I- Q_b (\bbeta u)\|_\pT^2 \right)\\
\le & 2 \sum_{T\in\T_h} h_T \left(\|\alpha\nabla u_I -\alpha\nabla u \|_\pT^2
+ \|\alpha\nabla u - Q_{b}(\alpha\nabla u)\|_\pT^2\right) \\
& + 2 \sum_{T\in\T_h} h_T \left( \|\bbeta u_I- \bbeta u\|_\pT^2 + \|\bbeta u - Q_b (\bbeta u)\|_\pT^2 \right).
\end{split}
\end{equation}
On each element $T\in \T_h$, let $\overline{g}$ be the average of the function $g=g(x)$ on $T$. It follows that
\begin{equation}\label{EQ:08-27-2017:001}
\|\alpha\nabla u - Q_{b}(\alpha\nabla u)\|_\pT \leq \|(\alpha - \overline{\alpha})\nabla u\|_\pT
\end{equation}
and
\begin{equation}\label{EQ:08-27-2017:002}
\begin{split}
\|\bbeta u - Q_b (\bbeta u)\|_\pT \leq & \|\bbeta u - \overline{\bbeta} \overline{u}\|_\pT\\
\leq & \|\bbeta u - \overline{\bbeta} u\|_\pT + \|\overline{\bbeta} u - \overline{\bbeta} \overline{u}\|_\pT\\
= & \|(\bbeta - \overline{\bbeta}) u\|_\pT + \|\overline{\bbeta} (u - \overline{u})\|_\pT.
\end{split}
\end{equation}
Now substituting (\ref{EQ:08-27-2017:001}) and (\ref{EQ:08-27-2017:002}) into (\ref{EQ:March09:201}) yields
\begin{equation}\label{EQ:March09:801}
\begin{split}
J_2(u_I,Q_b q) \le & 2 \sum_{T\in\T_h} h_T \left(\|\alpha\nabla (u_I-u)\|_\pT^2
+ \|(\alpha-\overline{\alpha})\nabla u\|_\pT^2\right) \\
& + 2 \sum_{T\in\T_h} h_T \left( \|\bbeta (u_I-u)\|_\pT^2 + \|(\bbeta - \overline{\bbeta}) u\|_\pT^2
+ \|\overline{\bbeta}(u - \overline{u})\|_\pT^2 \right)\\
\le & C (\|\alpha\|_\infty^2 h^2 \|u\|_2^2 + \|\bbeta\|_\infty^2 h^2 \|u\|_1^2 + \osc(\alpha, \T_h)^2 \|u\|_1^2 + \osc(\bbeta,\T_h)^2\|u\|_0^2).
\end{split}
\end{equation}
By combining (\ref{0325-01}) with (\ref{EQ:March09:801}) we obtain
$$
J_2(e_h, \eta_h)^{\frac{1}{2}} \leq C (\|\alpha\|_\infty h \|u\|_2 + \|\bbeta\|_\infty h \|u\|_1 + \osc(\alpha, \T_h) \|u\|_1 + \osc(\bbeta,\T_h) \|u\|_0).
$$
Finally, substituting the above estimate into (\ref{EQ:March01:102}) yields
$$
\|u_h-u_I\|_1 \leq C (\|\alpha\|_\infty h \|u\|_2 + \|\bbeta\|_\infty h \|u\|_1 + \osc(\alpha, \T_h) \|u\|_1 + \osc(\bbeta,\T_h) \|u\|_0),
$$
which completes the proof of the theorem.
\end{proof}

\medskip

We end this section by showing that $q_h$ indeed provides a very accurate approximation of the exact flux. For any
$m\in L^2(\E_h)$, define its $L^2$ norm by
\[
\3bar m\3bar_0=\left(\sum_{T\in\T_h} \sum_{e\subset\partial T} h_e\int_e m^2 ds\right)^\frac12.
\]

\begin{theorem}\label{thm:flux}
Under the assumptions of Theorem \ref{thm:h1}, Let $q=-(\alpha\nabla u + \bbeta u)\cdot \bn_e$ be the exact flux on $e\in\E_h$ along the direction $\bn_e$. If the exact solution $u\in H^2(\Omega)$, then the following error estimate holds true
\begin{equation}\label{fluxestimate.000}
\3bar q-q_h\3bar_0\le C (\|\alpha\|_\infty h \|u\|_2 + \|\bbeta\|_\infty h \|u\|_1 + \osc(\alpha, \T_h) \|u\|_1 + \osc(\bbeta,\T_h) \|u\|_0).
\end{equation}
In particular, if the coefficients $\alpha$ and $\bbeta$ are differentiable on each element $T\in \T_h$, then
\begin{equation}\label{fluxestimate}
\3bar q-q_h \3bar_0\le Ch\|u\|_2,
\end{equation}
where $C=C(\alpha,\bbeta)$ is a constant depending on the element-wise $W^{1,\infty}$ norm of the coefficients $\alpha$ and $\bbeta$.
\end{theorem}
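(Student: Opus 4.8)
The plan is to split the flux error into an interpolation part and a discrete part by writing
$q-q_h=(q-Q_b q)-\eta_h$, where $\eta_h=q_h-Q_b q$ is exactly the error function introduced in the proof of Theorem~\ref{thm:h1}. By the triangle inequality for $\3bar\cdot\3bar_0$ it then suffices to bound $\3bar q-Q_b q\3bar_0$ and $\3bar\eta_h\3bar_0$ separately by the right-hand side of \eqref{fluxestimate.000}. The first term is a pure approximation-theory estimate for the $L^2$-projection $Q_b$ of the exact flux, while the second is controlled by the optimality already exploited in Theorem~\ref{thm:h1}.

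For the interpolation part $\3bar q-Q_b q\3bar_0$, I would argue edge by edge. Since $q=-(\alpha\nabla u+\bbeta u)\cdot\bn_e$ and $Q_b$ is the $L^2$-projection onto edgewise constants, on each $T$ I split the coefficients into cell averages plus fluctuations, $\alpha=\overline\alpha+(\alpha-\overline\alpha)$ and $\bbeta=\overline\bbeta+(\bbeta-\overline\bbeta)$, exactly as in \eqref{EQ:qv-decomposition}, \eqref{EQ:08-27-2017:001}--\eqref{EQ:08-27-2017:002}. For the cell-averaged contributions $\overline\alpha\nabla u\cdot\bn_e$ and $\overline\bbeta u\cdot\bn_e$, which lie in $H^1(T)$ because $u\in H^2(\Omega)$, the edgewise projection error obeys $\|(I-Q_b)g\|_{0,e}^2\le C h_T\|\nabla g\|_{0,T}^2$; multiplying by the weight $h_e\sim h_T$ and summing yields the $\|\alpha\|_\infty h\|u\|_2$ and $\|\bbeta\|_\infty h\|u\|_1$ terms. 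For the fluctuation contributions $(\alpha-\overline\alpha)\nabla u\cdot\bn_e$ and $(\bbeta-\overline\bbeta)u\cdot\bn_e$, I use the $L^2$-stability $\|(I-Q_b)g\|_{0,e}\le\|g\|_{0,e}$ together with the trace inequality applied to $\nabla u$ and $u$ to convert edge norms into element norms, which produces the $\osc(\alpha,\T_h)\|u\|_1$ and $\osc(\bbeta,\T_h)\|u\|_0$ terms. This computation parallels the bound \eqref{EQ:March09:801} for $J_2(u_I,Q_b q)$.

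For the discrete part $\3bar\eta_h\3bar_0$, I would rewrite $\eta_h=\bigl(\eta_h+(\alpha\nabla e_h+\bbeta e_h)\cdot\bn_e\bigr)-(\alpha\nabla e_h+\bbeta e_h)\cdot\bn_e$. The first group is precisely the integrand of $J_2(e_h,\eta_h)$, so its $\3bar\cdot\3bar_0$-norm is at most $\sqrt{2\,J_2(e_h,\eta_h)}$ (using $h_e\le h_T$). The second group is bounded, through a trace and inverse estimate for the piecewise-linear $e_h$ whose gradient is elementwise constant, by $C(\|\alpha\|_\infty+\|\bbeta\|_\infty)\|e_h\|_1$, and Lemma~\ref{Lemma:H1NormBound} (see \eqref{EQ:March01:102}) bounds $\|e_h\|_1$ again by $C\,J_2(e_h,\eta_h)^{1/2}$. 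Hence $\3bar\eta_h\3bar_0\le C\,J_2(e_h,\eta_h)^{1/2}$ with $C$ depending on the coefficient norms. Invoking the key inequality \eqref{0325-01}, namely $J_2(e_h,\eta_h)\le J_2(u_I,Q_b q)$, and then \eqref{EQ:March09:801}, closes this part.

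Combining the two bounds gives \eqref{fluxestimate.000}, and the special case \eqref{fluxestimate} follows at once: when $\alpha$ and $\bbeta$ are elementwise $W^{1,\infty}$, one has $\osc(\alpha,\T_h)\le Ch\|\alpha\|_{W^{1,\infty}}$ and $\osc(\bbeta,\T_h)\le Ch\|\bbeta\|_{W^{1,\infty}}$, so every term collapses to $Ch\|u\|_2$ with $C=C(\alpha,\bbeta)$. The hard part will be the interpolation estimate for $\3bar q-Q_b q\3bar_0$: the exact flux has only $H^{1/2}$ regularity along a generic edge, so it cannot be differentiated tangentially, and the remedy is to keep $\alpha\nabla u$ and $\bbeta u$ as $H^1(T)$ functions on the element and pass to the edge only through the trace inequality, splitting off the cell averages so that the surviving projection errors are measured either by $h_T$ times an element gradient or by the coefficient oscillation. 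A secondary point is that, as in Theorem~\ref{thm:h1}, the constant $C$ inevitably depends on $\|\alpha\|_\infty$ and $\|\bbeta\|_\infty$ through the $\eta_h$ estimate, which is consistent with the $C=C(\alpha,\bbeta)$ stated in \eqref{fluxestimate}.
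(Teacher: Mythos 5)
Your proof is correct, but it takes a genuinely different route from the paper's. The paper never touches the projection error $\3bar q-Q_b q\3bar_0$: instead it inserts the (non-conservative) continuous-Galerkin flux $-(\alpha\nabla u_h+\bbeta u_h)\cdot\bn_e$ as the intermediate quantity, so that $\3bar q-q_h\3bar_0\le C\bigl(J_2(u_h,q_h)^{1/2}+J_2(u_h,q)^{1/2}\bigr)$. It then bounds $J_2(u_h,q_h)^{1/2}\le J_2(e_h,\eta_h)^{1/2}+J_2(u_I,Q_b q)^{1/2}\le 2J_2(u_I,Q_b q)^{1/2}$ by the triangle inequality together with \eqref{0325-01}, and bounds $J_2(u_h,q)$ by a trace inequality applied to $u-u_h$ combined with the already-proved $H^1$ estimate \eqref{h1-02} of Theorem \ref{thm:h1}. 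You instead insert $Q_b q$, which forces you to prove the edgewise approximation estimate for $\3bar q-Q_b q\3bar_0$ from scratch (your average-plus-fluctuation splitting with the trace inequality is the right way to do this, mirroring \eqref{EQ:08-27-2017:001}--\eqref{EQ:08-27-2017:002}, and the stray $\osc\cdot h\|u\|_2$ and $\osc\cdot h\|u\|_1$ byproducts are indeed absorbable since $\osc\le 2\|\cdot\|_\infty$), while your treatment of $\3bar\eta_h\3bar_0$ via $J_2(e_h,\eta_h)^{1/2}$, the trace/inverse estimates for the piecewise-linear $e_h$, and \eqref{EQ:March01:102} is sound because $\nabla_w\cdot\eta_h=0$ makes Lemma \ref{Lemma:H1NormBound} applicable. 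Both arguments ultimately rest on the same engine, \eqref{0325-01} and \eqref{EQ:March09:801}. What each buys: the paper's route is shorter, recycling Theorem \ref{thm:h1} wholesale and avoiding any approximation theory for $Q_b$; your route is self-contained in the oscillation regime --- it delivers \eqref{fluxestimate.000} directly without passing through \eqref{h1-02}, whereas the paper's displayed final step formally invokes the differentiable-coefficient estimate and leaves the general oscillation form \eqref{fluxestimate.000} to an implicit repetition of the argument with \eqref{h1} --- at the cost of the extra projection-error computation and constants in front of the oscillation terms that depend on $\|\alpha\|_\infty,\|\bbeta\|_\infty$, which is consistent with how the constants behave in the paper.
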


\begin{proof} From $e_h=u_h-u_I$ and $\eta_h = q_h - Q_b q$ we have
\[
J_2(u_h,q_h)= s_h((e_h,\eta_h),(u_h,q_h))+s_h((u_I,Q_b q),(u_h,q_h)).
\]
Using the Cauchy-Schwarz inequality and the estimate \eqref{0325-01} one arrives at
\[
J_2(u_h,q_h)^\frac12\le J_2(e_h,\eta_h) ^\frac12+J_2(u_I,Q_b q)^\frac12\le 2 J_2(u_I,Q_b q)^\frac12,
\]
which, by the estimate (\ref{EQ:March09:801}), leads to
\begin{equation}\label{EQ:Sept:001}
J_2(u_h,q_h)^\frac12 \leq C (\|\alpha\|_\infty h \|u\|_2 + \|\bbeta\|_\infty h \|u\|_1 + \osc(\alpha, \T_h) \|u\|_1 + \osc(\bbeta,\T_h)\|u\|_0).
\end{equation}

As $q=-(\alpha\nabla u + \bbeta u)\cdot \bn_e$, we then have
\begin{equation}\label{EQ:Sept:002}
\begin{split}
J_2(u_h,q) = & \sum_{T\in\T_h} \sum_{e\subset\partial T} h_e \int_e |\alpha\nabla(u_h-u)\cdot\bn_e+ \bbeta(u_h-u)\cdot\bn_e|^2 ds\\
\le & 2 \sum_{T\in\T_h} \sum_{e\subset\partial T} h_e  \int_e \left(\|\alpha\|_{\infty, T}^2  |\nabla(u_h-u)|^2 ds + \|\bbeta\|_{\infty, T}^2 |u_h-u|^2\right) ds\\
\le & C(\|\alpha\|_\infty, \|\bbeta\|_{\infty}) (h^2 \|u\|_2^2 + \|u-u_h\|_1^2),
\end{split}
\end{equation}
where the usual trace inequality has been employed in the last line. Using the error estimate (\ref{h1-02}) we arrive at
$$
J_2(u_h,q)^{\frac12}\leq  C(\|\alpha\|_\infty, \|\bbeta\|_{\infty}) h \|u\|_2.
$$
Consequently,
\[
\3bar q-q_h\3bar_0 \le C(J_2(u_h,q_h)^\frac12+J_2(u_h,q)^\frac12)\le Ch\|u\|_2,
\]
provided that the coefficients $\alpha$ and $\bbeta$ are differentiable locally on each element $T$.
This proves the theorem.
\end{proof}

\section{Numerical Experiments}\label{numerical-experiments}
The purpose of this section is two-fold. First, we numerically verify the theoretical error estimates developed in the previous section by applying the algorithm \eqref{min-LagrangeForm2-01}-\eqref{min-LagrangeForm2-02} to the elliptic problem \eqref{ellipticbdy}-\eqref{ellipticbc}. Secondly, we shall demonstrate the significance of having a conservative numerical flux by applying the CFO algorithm to a simplified two-phase porous media flow model.

\subsection{Elliptic Equations}
The elliptic test problems are defined in two-dimensional square domains by seeking
$u \in H^1(\Omega)$ such that
\begin{equation}\label{EQ:EllipticTest}
\begin{split}
-\nabla\cdot(\alpha\nabla u)&=f,\qquad {\rm in}\  \Omega\\
u&=g, \qquad {\rm on}\ \partial\Omega.
\end{split}
\end{equation}
The CFO algorithm \eqref{min} with $r=2$ (i.e., the numerical scheme \eqref{min-LagrangeForm2-01}-\eqref{min-LagrangeForm2-02}) is implemented for each test case with numerical solutions denoted as $u_h$, $q_h$, and $\lambda_h$. The following metrics are used to measure the magnitude of the error:
\begin{eqnarray*}
\text{$L^2$-norm: }
&&\|u_h-u \|_{0}=\left(\sum_{{T}\in\T_h }\int_{{T}}|u_h-u|^2 d{T}\right)^{1/2},\\
\text{$H^1$-norm: }
&&\|u_h-u\|_1=\|\nabla(u_h-u) \|_{0},\\
\text{Residual-error: }
&& J_2(u_h,q_h)^{\frac12}=\left(\sum_{{T}\in\T_h }h_T\int_{\partial {T}}|\alpha \nabla u_h\cdot\bn_{e}+q_h|^2 d s\right)^{1/2}.
\end{eqnarray*}
\subsubsection{Test Case 1: Smooth Coefficients} In this experiment, the elliptic problem has exact solution $u=\cos(\pi x)\cos(\pi y)$ with domain $\Omega=(0,1)^2$. The coefficient $\alpha$ is the identity matrix. The right-hand side function $f$ and the Dirichlet boundary data $g$ are chosen to match the exact solution. To numerically analyze the affect of the finite element partition $\T_h$, both uniform and nonuniform meshes are considered; see Figure \ref{fig:mesh}. Table \ref{table1} illustrates the performance of the CFO algorithm for the test problem on uniform mesh. It shows clearly that the errors $\|u_h-u \|_{0}$ and $\|\lambda_h\|_{0}$ converge to zero at the rate of $h^2$, and that $\|\nabla(u_h-u) \|_{L^2}$ and the residual error $J_2(u_h,q_h)^{\frac12}$ converge at the order of $h$. The results are in good consistency with the theory. The numerical experiments on the irregular meshes as shown in Table \ref{table2} suggest the same convergence as on the uniform meshes.

\begin{figure}[H]
\centering
\includegraphics [width=0.4\textwidth]{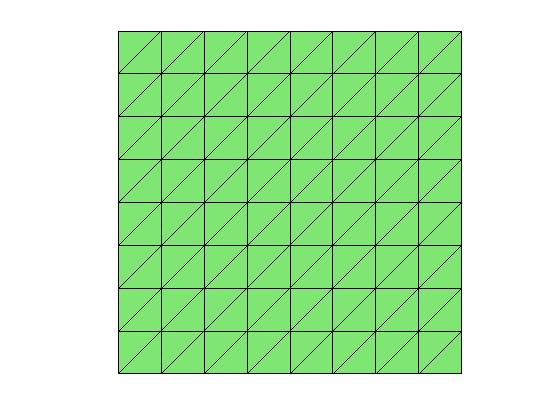}
\includegraphics [width=0.4\textwidth]{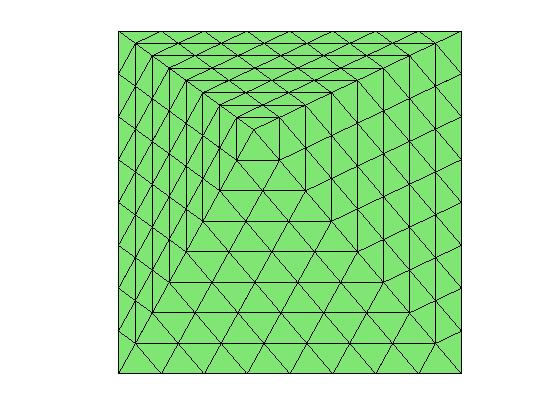}
\caption{Uniform and irregular triangle meshes with $h=1/8$.}
\label{fig:mesh}
\end{figure}
\begin{table}[h]
\begin{center}
\caption{Error and convergence performance of the CFO scheme for Test Case 1 on uniform meshes.}\label{table1}
\begin{tabular}{||c|cc|cc|cc|cc||}
\hline
h & $\|u_h-u \|_{0}$ & Order & $\|u_h-u\|_{1}$ & Order &$J_2(u_h,q_h)^{\frac12}$& Order &$\|\lambda_h\|_{0}$&Order\\
\hline
1/2 & 0.234         &          & 1.54           &           &4.49         &         &0.246&  \\
1/4 & 9.53e-2     & 1.3    & 0.860           & 0.84   &2.59         & 0.79 &0.102& 1.3\\
1/8 & 2.92e-2     & 1.7    & 0.437         & 0.98   &1.34         & 0.95 &3.06e-2& 1.7\\
1/16 &7.80e-3    & 1.9    & 0.218         & 0.99   &0.676       & 1.0   &8.12e-3& 1.9\\
1/32 &1.99e-3    & 2.0    & 0.109         & 1.0     &0.339       & 1.0   &2.07e-3& 2.0 \\
1/64 &4.99e-4    & 2.0    & 5.45e-2 & 1.0    &0.169       & 1.0    &5.18e-4& 2.0 \\
1/128 &1.25e-4  & 2.0    & 2.73e-2 & 1.0    &8.47e-2    & 1.0   &1.30e-4& 2.0 \\
\hline
\end{tabular}
\end{center}
\end{table}

\begin{table}[h]
\begin{center}
\caption{Error and convergence performance of the CFO scheme for Test Case 1 on irregular meshes.}\label{table2}
\begin{tabular}{||c|cc|cc|cc|cc||}
\hline
h & $\|u_h-u \|_{0}$ & Order & $\|u_h-u\|_{1}$ & Order &$J_2(u_h,q_h)^{\frac12}$& Order &$\|\lambda_h\|_{0}$&Order\\
\hline
1/2     & 0.147       &          & 1.09              &            &2.63      &          &0.125&  \\
1/4     & 4.07e-2   & 1.9    & 0.571            & 0.94    &1.59      & 0.73  &3.51e-2 &1.8 \\
1/8     & 1.10e-2   & 1.9    & 0.288            & 0.99    &0.836    & 0.93  &9.23e-3 &1.9 \\
1/16   &2.87e-3    & 1.9    & 0.144            & 1.0      &0.424    & 0.98  &2.36e-3 &2.0\\
1/32   &7.32e-4    & 2.0    & 7.21e-2    & 1.0      &0.213    & 0.99  &5.95e-4&2.0\\
1/64   &1.84e-4    & 2.0    & 3.60e-2   & 1.0      &0.106    & 0.99  &1.49e-4&2.0\\
1/128 &4.62e-5    & 2.0    & 1.80e-2   & 1.0      &5.32e-2 & 1.0    &3.73e-5&2.0\\
\hline
\end{tabular}
\end{center}
\end{table}

\subsubsection{Test Case 2: H\"older Continuous Coefficients} The coefficient matrix $\alpha$ in this test is given by
\begin{eqnarray}
\alpha=\begin{pmatrix}
        1+|x| & 0.5|x|^{\frac13}|y|^{\frac13} \\
        0.5|x|^{\frac13}|y|^{\frac13} & 1+|y|
        \end{pmatrix}
\end{eqnarray}
on the square domain $\Omega=(-1,1)^2$. The coefficient matrix is clearly non-smooth, but H\"older continuous. The right-hand side function and the Dirichlet boundary data are chosen to match the exact solution of $u(x,y)=\cos(\pi x)\cos(\pi y)$. Note that this example has been considered in \cite{Smears_SIAMJNA_2013,WangWang_2016}.

We use the CFO algorithm of $r=2$ (i.e., (\ref{min-LagrangeForm2-01}-\ref{min-LagrangeForm2-02}) to approximate the above elliptic problem. The meshes are obtained by first uniformly partitioning the square domain $\Omega$ into $N^2,\ N=\frac{1}{h}$, small squares and then decomposing each small square into $2$ similar triangles. The corresponding error and convergence information are reported in Table \ref{table4}. Note that $u$ denotes the exact solution of \eqref{EQ:EllipticTest}, and $(u_h;q_h)$ is the solution of the constrained minimization problem \eqref{min} with $r=2$.

We observe from Table \ref{table4} that the convergence of the algorithm in both the $H^1$ and $L^2$ norms has optimal order of $k=1$ and $k=2$, respectively. Moreover, for the flux on element edges, the convergence is also of optimal order of $k=1$. These numerical results support strongly the theoretical findings in the previous section. In fact, the numerical results outperform the theory in $H^1$, as the coefficient $\alpha$ is non-smooth nor Lipschitz continuous on each element so that no convergence of order $k=1$ can be deduced from the theory. We point out that there was no theory of optimal order of convergence in $L^2$ for CFO, though the numerics strongly suggest such a result in the $L^2$ norm. Interested readers are encouraged to study the $L^2$ convergence for the CFO algorithm.

It should be emphasized that the main advantage of the CFO algorithm (\ref{min-LagrangeForm2-01}-\ref{min-LagrangeForm2-02}) is that one obtains not only a discrete solution $u_h$ with optimal order of convergence, but also an element-wise conserving flux with optimal order of convergence as well.

\begin{table}[h]
\caption{Errors and convergence performance of the CFO scheme (\ref{min-LagrangeForm2-01}-\ref{min-LagrangeForm2-02}) for Test Case 2 with H\"older continuous coefficients.}\label{table4}
\begin{center}
\begin{tabular}{||c|cc|cc|cc||}
\hline
h & $\|u_h-u \|_0$ & Order & $\|u_h-u\|_1$ & Order &$\3bar q-q_h\3bar_0$ & Order \\
\hline
1/2     &0.769      &         & 3.27       &          &9.15     &             \\
1/4     &0.265      & 1.5    &1.74       & 0.92  &5.19     & 0.82    \\
1/8     &7.15e-2  & 1.9    &0.871      & 0.99  &2.67    & 0.96    \\
1/16   &1.79 e-2 & 2.0    & 0.436     & 1.0    &1.34    & 0.99   \\
1/32   &4.36e-3  & 2.0    & 0.218     & 1.0    &0.67    & 1.0    \\
1/64   &1.05e-3  & 2.0    & 0.109     & 1.0    &0.337  & 1.0     \\
1/128 &2.58e-4  & 2.0    & 5.47e-2  & 1.0   &0.168  & 1.0     \\
\hline
\end{tabular}
\end{center}
\end{table}

\subsubsection{Test Case 3: Discontinuous Coefficients}
In this numerical test, the domain of the elliptic problem (\ref{EQ:EllipticTest}) is chosen as the unit square $\Omega=(0,1)^2$, and the coefficient $\alpha$ is given by
\begin{eqnarray}
\alpha=\left\{
       \begin{array}{ll}
       \begin{pmatrix}
        1 & 0 \\
        0 & 1
       \end{pmatrix}, \qquad \text{ if } x<0.5,\\[10pt]
       \begin{pmatrix}
        10 & 3 \\
        3  & 1
       \end{pmatrix}, \qquad \text{ if } x\geq 0.5,
       \end{array}\right.
\end{eqnarray}
which is clearly discontinuous along the vertical line of $x=\frac12$. With properly chosen data on the right-hand side function and the Dirichlet boundary value, the exact solution of (\ref{EQ:EllipticTest}) is given by
\begin{eqnarray}
u(x,y)=\left\{
       \begin{array}{ll}
          1-2y^2+4xy+6x+2y,\qquad  \text{ if } x<0.5,\\
          -2y^2+1.6xy-0.6x+3.2y+4.3, \qquad \text{ if } x\geq 0.5.
       \end{array}\right.
\end{eqnarray}

Table \ref{table5} illustrates the performance of the CFO scheme \eqref{min-LagrangeForm2-01}-\eqref{min-LagrangeForm2-02} when applied to the present test case. The results suggest an optimal order of convergence for the numerical approximation $u_h$ in the usual $H^1$ norm, which is in great consistency with the error estimate developed in the previous section. Likewise, the numerical approximation for the flux variable $q_h$ also has an optimal order of convergence, as predicted by the convergence theory. On the other hand, the convergence in $L^2$ for $u_h$ seems to be around $k=1.8$. Again, we do not have a theory on the optimal order of convergence in $L^2$.
\begin{table}[h]
\caption{Errors and convergence performance of the CFO algorithm (\ref{min-LagrangeForm2-01}-\ref{min-LagrangeForm2-02}) for Test Case 3 with a discontinuous coefficient.}\label{table5}
\begin{center}
\begin{tabular}{||c|cc|cc|cc||}
\hline
h & $\|u_h-u \|_0/\|u \|_0$ & Order & $\|u_h-u\|_1/\|u\|_1$ & Order &$\3bar q-q_h\3bar_0/\3bar q\3bar_0$ & Order \\
\hline
1/4      &2.43e-03   &0.00     &6.57e-02   &0.00     &7.59e-02    &0.00\\
1/8      &6.71e-04   &1.86     &3.28e-02   &1.00     &3.04e-02    &1.32\\
1/16     &2.08e-04   &1.69     &1.64e-02   &1.00     &1.32e-02    &1.20\\
1/32     &6.16e-05   &1.75     &8.17e-03   &1.00     &6.12e-03    &1.11\\
1/64     &1.79e-05   &1.78     &4.08e-03   &1.00     &2.96e-03    &1.05\\
1/128    &5.19e-06   &1.79     &2.04e-03   &1.00     &1.46e-03    &1.02\\
1/256    &1.47e-06   &1.82     &1.02e-03   &1.00     &7.30e-04    &1.01\\
\hline
\end{tabular}
\end{center}
\end{table}

\subsubsection{Test Case 4: Discontinuous Coefficients}
In this test case, the domain $\Omega=(-1,1)^2$ is split into four subdomains $\Omega = \bigcup_{i=1}^{4} \Omega_i$ by the $x$ and $y$ axis. The diffusion coefficient $\alpha$ is given by
\begin{eqnarray*}
\alpha= \begin{pmatrix}
        \alpha_i^x & 0 \\
        0 & \alpha_i^y
       \end{pmatrix}, \text{ if } (x,y) \in \Omega_i,\\
\end{eqnarray*}
and the exact solution is given by $u(x,y)=\alpha_i \sin(2\pi x)\sin(2\pi y)$. Here the values of the coefficient $\alpha_i^x,\,\alpha_i^y$ and $\alpha_i$ are specified in Table \ref{table6}. It is clear that the diffusion coefficient $\alpha$ is discontinuous across the lines $x = 0$ and $y = 0$.

\begin{table}[h]
\caption{Test Case 4: Parameter values for the diffusion coefficients and the exact solution.}\label{table6}
\begin{center}
\begin{tabular}{|l|l|}
\hline
&\\[-7pt]
$\alpha^x_4 = 0.1$ & $\alpha^x_3 = 1000$   \\[1pt]
$\alpha^y_4 =0.01$ & $\alpha^y_3 = 100 $   \\[1pt]
$\alpha_4   = 100$ & $\alpha_3   = 0.01$   \\[3pt]
\hline
&\\ [-7pt]
$\alpha^x_1 = 100$   & $\alpha^x_2 = 1  $\\[1pt]
$\alpha^y_1 = 10 $   & $\alpha^y_2 = 0.1$\\[1pt]
$\alpha_1   = 0.1$   & $\alpha_2   = 10 $\\ [3pt]
\hline
\end{tabular}
\end{center}
\end{table}

Table \ref{table7} presents the numerical performance of the CFO scheme \eqref{min-LagrangeForm2-01}-\eqref{min-LagrangeForm2-02} when applied to the present test case. The results suggest an optimal order of convergence for the numerical approximation $u_h$ in the usual $H^1$ norm and the flux variable $q_h$ in $L^2$. The numerical results are in great consistency with the error estimate developed in the previous section. It should be noted that the numerical results strongly suggest an optimal order of convergence in $L^2$ for the primal variable $u_h$.

\begin{table}[h]
\caption{Relative error and convergence performance of the CFO algorithm (\ref{min-LagrangeForm2-01}-\ref{min-LagrangeForm2-02}) for Test Case 4 with discontinuous coefficients.}\label{table7}
\begin{center}
\begin{tabular}{||c|cc|cc|cc||}
\hline
h & $\|u_h-u \|_0/\|u \|_0$ & Order & $\|u_h-u\|_1/\|u\|_1$ & Order &$\3bar q-q_h\3bar_0/\3bar q\3bar_0$ & Order \\
\hline
1/4       &8.46e-01   &0.00     &8.11e-01   &0.00     &6.15e-01    &0.00\\
1/8       &4.67e-01   &0.86     &5.13e-01   &0.66     &3.84e-01    &0.68\\
1/16      &1.75e-01   &1.42     &2.45e-01   &1.07     &1.78e-01    &1.11\\
1/32      &5.07e-02   &1.79     &1.10e-01   &1.16     &7.69e-02    &1.21\\
1/64      &1.34e-02   &1.92     &5.15e-02   &1.09     &3.48e-02    &1.14\\
1/128     &3.42e-03   &1.97     &2.49e-02   &1.05     &1.66e-02    &1.07\\
1/256     &8.62e-04   &1.99     &1.23e-02   &1.02     &8.15e-03    &1.03\\
\hline
\end{tabular}
\end{center}
\end{table}

\subsubsection{The Lagrange multiplier $\lambda_h$}
The CFO algorithm involves two essential ideas in flux approximation: (1) the satisfaction of the mass conservation equation, and (2) the minimization of the object function $J_r(v,p)$ defined as in (\ref{EQ:functional}). As the value of the PDE coefficients may vary from element to element, one may modify the object function as follows by placing a weight $\tau_T$ on each element:
\begin{equation}\label{EQ:functional-new}
J_r^*(p,v):= \frac{1}{r}\sum_{T\in\T_h} \sum_{e\subset\pT} \tau_T h_T \int_e |p+\alpha^*\nabla v \cdot \bn_e +\bbeta^* v\cdot\bn_e|^r ds.
\end{equation}
For the CFO algorithm \ref{min}, it can be seen from \eqref{min-LagrangeForm-01}-\eqref{min-LagrangeForm-02} that the weight parameter $\tau_T$ is automatically adjusted by the Lagrange multiplier $\lambda_h$ on each element, as the weight $\tau_T$ can be easily absorbed by $\lambda_h$ through the same scaling on each element. Therefore, the CFO algorithm is quite robust in the minimization part.

Figure \ref{fig:lambda} shows the surface plot of the Lagrange multiplier $\lambda_h$ for each test case. It can be seen that $\lambda_h$ is quite sensitive to the continuity and smoothness of the true solution. We conjecture that $\lambda_h$ would play the role of a posteriori error estimator in adaptive grid local refinements.

\begin{figure}
\centering
\subfigure[Smooth coefficient (left) and H\"older continuous coefficient (right)]{
\label{Fig.sub.1.lam}
\includegraphics [width=0.35\textwidth]{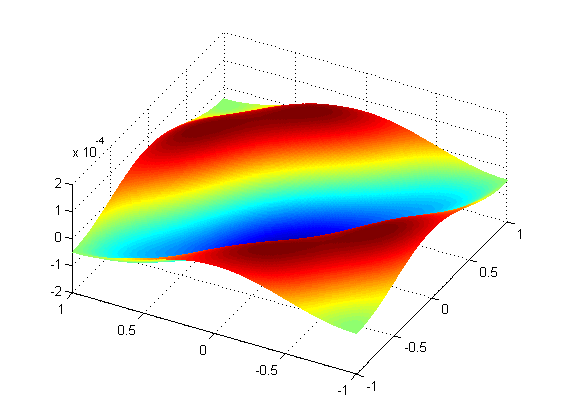}
\includegraphics [width=0.35\textwidth]{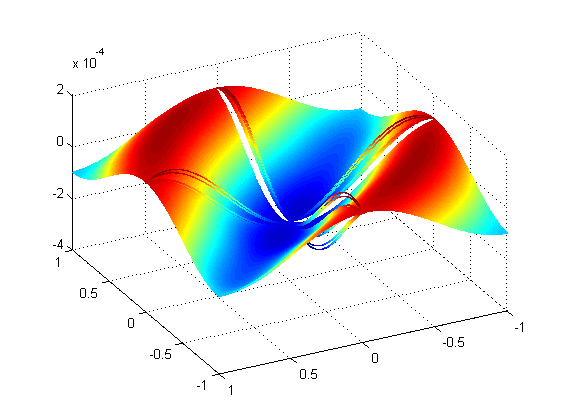}
}\\
\subfigure[Discontinuous coefficients: test case 3 (left) and test case 4 (right)]{
\label{Fig.sub.2.lam}
\includegraphics [width=0.35\textwidth]{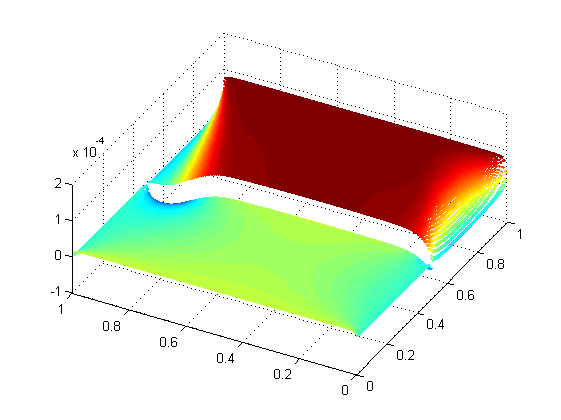}
\includegraphics [width=0.35\textwidth]{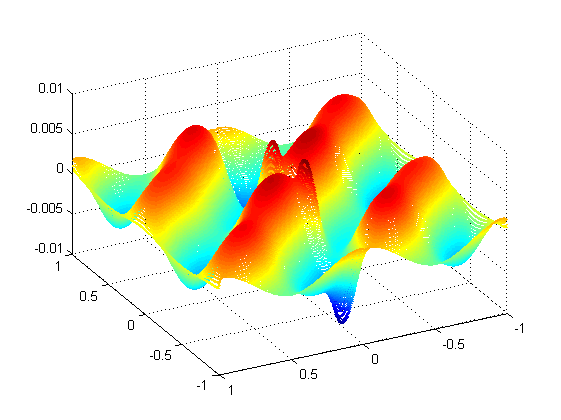}
}\\
\caption{The solution profile for the Lagrange multiplier $\lambda_h$ on a partition of size $128\times 128$
 arising from the CFO scheme (\ref{min-LagrangeForm2-01}-\ref{min-LagrangeForm2-02}).}
\label{fig:lambda}
\end{figure}



\subsection{A Two-Phase Flow in Porous Media}
We consider a simplified two-phase flow problem in porous media which seeks a saturation function $S$ and fluid pressure $p$ satisfying
\begin{eqnarray}
-\nabla\cdot(\lambda(S)\kappa(x,y) \nabla p)&=&0, \qquad {\rm in}\  \Omega=(0,1)^2, \label{two-phase-elliptic}\\
\frac{\partial S}{\partial t} +\d(\bv f(S))&=&0,\qquad  t\in{\bf R}^+, \label{two-phase-transport}
\end{eqnarray}
with the boundary condition
\begin{eqnarray}
  p(0,y)=1, & & \; p(1,y)=0,\\
  \kappa(x,y)\nabla p(x,0)\cdot \bn & = &\kappa(x,y)\nabla p(x,1)\cdot \bn=0,
\end{eqnarray}
for the fluid pressure $p$ and the following initial and boundary conditions for
the saturation:
\begin{eqnarray}
S(0,y,t)&=&1,\qquad t \geq 0, \ y\in (0,1),\\
S(x,y,0) &=& 0,\qquad (x,y)\in (0,1)^2.
\end{eqnarray}
Here $\bv=-\lambda(S)\kappa(x,y) \nabla p$ is the Darcy's velocity, $\kappa(x,y)$ is the permeability of the porous media, $f(S)$ is the fractional flow function, and $\lambda(S)$ is the total mobility. This model problem has been studied in several existing literatures including \cite{Bush_SIAMJSC_2013,Bush_JCAM_2014}. In our numerical study, we choose a high-contrast, heterogeneous permeability function
\begin{eqnarray*}
\kappa(x,y)=\frac{1}{0.25-0.999(x-x^2)\sin(11.2\pi x)}\cdot\frac{1}{0.25-0.999(y-y^2)\sin(5.2\pi y)},
\end{eqnarray*}
the flow function
\begin{eqnarray*}
 f(S)=\frac{S^2}{S^2+(1-S)^2/5},
\end{eqnarray*}
and the total mobility function as
\begin{eqnarray*}
 \lambda(S)=S^2+(1-S)^2/5.
\end{eqnarray*}

The permeability profile is plotted in a logarithmic scale in Figure \ref{fig:permeability}. The figure shows that $\kappa(x,y)$ has a channelized pattern and is highly heterogenous. Note that when $\lambda(S)=1$, the equations reduce to a single-phase flow model.

\begin{figure}
\centering
\includegraphics [width=0.7\textwidth]{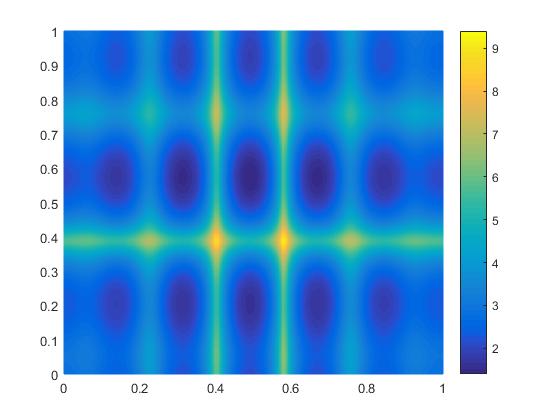}
\caption{Permeability profile plotted in the logarithmic scale.}
\label{fig:permeability}
\end{figure}

We use an operator splitting technique \cite{Aziz_Chapman_1979} to solve the above system. That is, we substitute the saturation at the previous time step
into \eqref{two-phase-elliptic} to compute the pressure $p$ and the Darcy's velocity $\bv$. Then we solve the transport equation \eqref{two-phase-transport} by an explicit time stepping scheme.

We first explain the numerical solution for \eqref{two-phase-elliptic} which is elliptic for any given saturation $S$. In existing literatures, there are many numerical methods available for approximating this elliptic equation, including the classical Galerkin finite element method (FEM). However, the straightforward numerical flux $\bv_h=\lambda(S)\kappa(x,y)\nabla p_h$ obtained from the classical FEM is known to be discontinuous across the element interface. Consequently, it is challenging to design conservative numerical schemes for the transport equation (\ref{two-phase-transport}) based on such numerical fluxes. To overcome this difficulty, we solve the flow equation (\ref{two-phase-elliptic}) by using the CFO algorithm proposed in Section \ref{sectionccfv}.

To discretize the transport equation, we first integrate (\ref{two-phase-transport}) with respect to time and then over each control element $T\in \T_h$. We apply the left-end point quadrature rule to its second term in time and use integration by parts to arrive at the following scheme \cite{Bush_SIAMJSC_2013,Bush_JCAM_2014}
\begin{eqnarray}\label{EQ:Saturation-Discrete}
 |{T}|(S_n^{{T}}-S_{n-1}^{{T}})+\Delta t \int_{\partial {T}}\bv_h\cdot\bn_e f(S_{n-1}^{{T}})ds=0.
\end{eqnarray}
From the numerical scheme (\ref{EQ:Saturation-Discrete}), the saturation $S$ is defined on each element and the numerical flux $\bv_h\cdot\bn_e$ is needed on each edge. The continuity of this numerical flux becomes to be necessary for the mass conservation of the scheme. Our CFO finite element method \eqref{min-LagrangeForm2-01}-\eqref{min-LagrangeForm2-02} was designed to provide not only a continuous numerical flux but also one that conserves mass locally on each element.

An upwinding strategy was employed for evaluating the boundary integral in (\ref{EQ:Saturation-Discrete}) as follows
\begin{eqnarray*}
\int_{\partial {T}}\bv_h\cdot\bn_e f(S_{n-1}^{{T}})ds = \sum_{e\in \partial{T}}\int_{e}\bv_h\cdot\bn_e f^*(S_{n-1}^{{T}})ds,
\end{eqnarray*}
with
\begin{equation*}
\int_{e}\bv_h\cdot\bn_e f^*(S_{n-1}^{{T}})ds := \left\{
\begin{array}{lll}
|e|\bv_h\cdot\bn_e f(S_{n-1}^{{T}_{e,L}}),\quad  \text{ if } \bv_h\cdot\bn_e\geq0\\
~\\
|e|\bv_h\cdot\bn_e f(S_{n-1}^{{T}_{e,R}}), \quad \text{ otherwise},
\end{array}
\right.
\end{equation*}
where $T_{e,L}$ and $T_{e,R}$ stand for the `left' (or the upwind side) and `right' side (or the downwind) element of edge $e$ respectively, according to the sign of $\bv_h\cdot\bn_e$ as shown in Figure \ref{fig:upwind}.
Note that the numerical flux $\bv_h\cdot\bn_e$ is constant on each edge and can be obtained directly from solving the elliptic part of the system.

\begin{figure}
\centering
\includegraphics [width=0.5\textwidth]{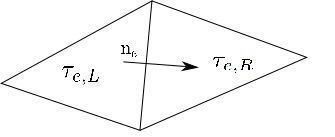}
\caption{${T}_{e,L}$ and ${T}_{e,R}$ stand for the `left' and `right' side element of edge $e$ according to the sign of $\bv_h\cdot\bn_e$.}
\label{fig:upwind}
\end{figure}

The saturation profiles for the two-phase flow at time T=0.002, 0.01 and 0.02 are shown in Figure \ref{fig:twophase-FOG}. In these plots, two finite element partitions of size $64\times64$ (Figure \ref{Fig.sub.1}) and $128\times128$ (Figure \ref{Fig.sub.2}) are employed. The time steps of $\Delta t = 1.0e-5$ (Figure \ref{Fig.sub.2}) and $\Delta t = 1.0e-6$ (Figure \ref{Fig.sub.3}) are used in this simulation. It can be seen that the numerical results corresponding to the two different partitions and time steps are very close to each other, and they are considered to be convergent. These computational results match the physics greatly, and hence confirm the effectiveness and robustness of the locally conservative fluxes provided by the CFO finite element method \eqref{min-LagrangeForm2-01}-\eqref{min-LagrangeForm2-02}.

\begin{figure}
\centering
\subfigure[Saturation profiles at time $T=0.002$, $0.01$ and $0.02$ using the partition of size $64\times64$, with $\Delta t =1.0\times10^{-5}$.]{
\label{Fig.sub.1}
\includegraphics [width=0.33\textwidth]{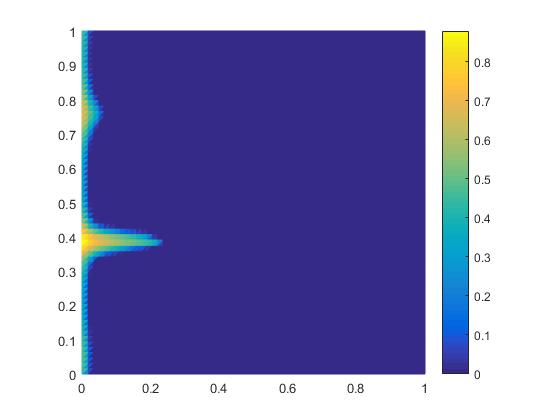}
\hskip-50pt
\includegraphics [width=0.33\textwidth]{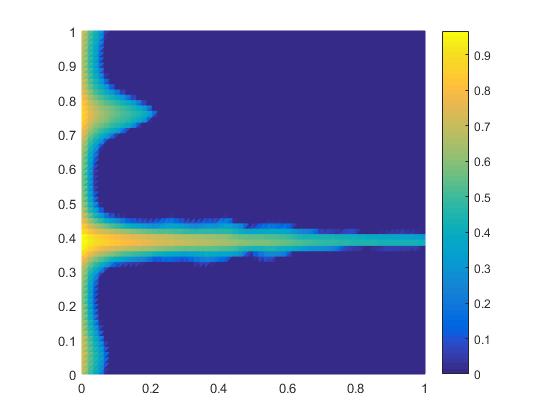}
\hskip-50pt
\includegraphics [width=0.33\textwidth]{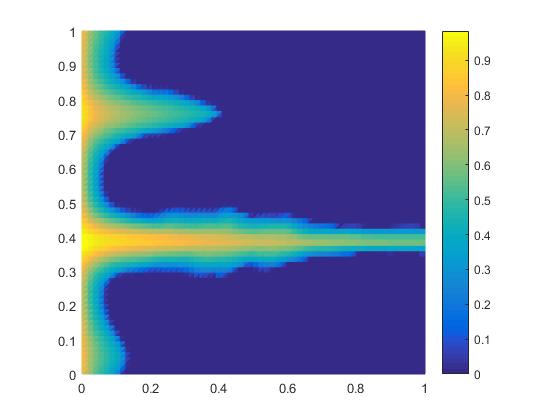}
}\\
\subfigure[Saturation profiles at $T=0.002$, $0.01$ and $0.02$ using the partition of size $128\times128$, with $\Delta t =1.0\times10^{-5}$.]{
\label{Fig.sub.2}
\includegraphics [width=0.33\textwidth]{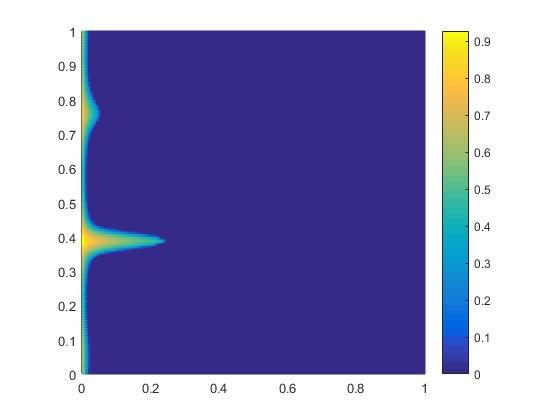}
\hskip-50pt
\includegraphics [width=0.33\textwidth]{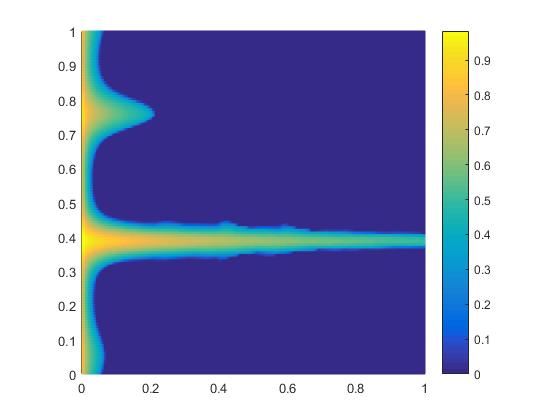}
\hskip-50pt
\includegraphics [width=0.33\textwidth]{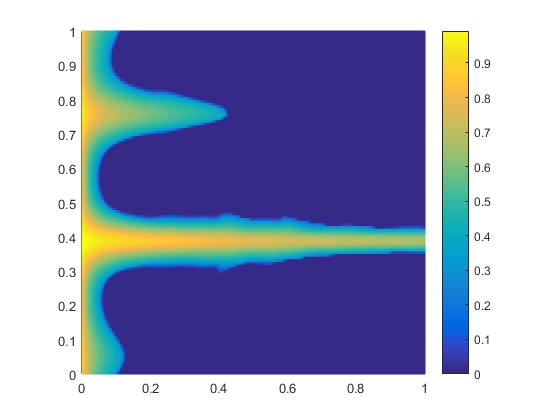}
}\\
\subfigure[Saturation profiles at $T=0.002$, $0.01$ and $0.02$ using the partition of size $128\times128$, with $\Delta t =1.0\times10^{-6}$.]{
\label{Fig.sub.3}
\includegraphics [width=0.33\textwidth]{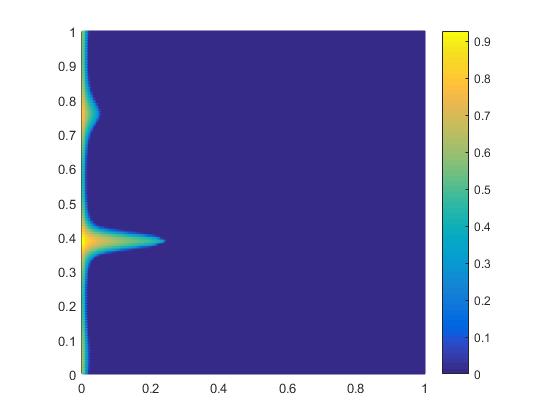}
\hskip-50pt
\includegraphics [width=0.33\textwidth]{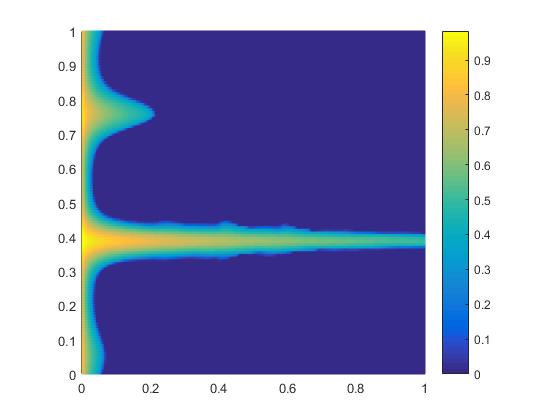}
\hskip-50pt
\includegraphics [width=0.33\textwidth]{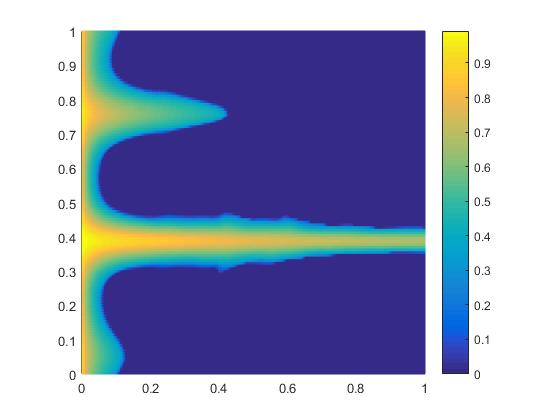}
}\\
\caption{Saturation profiles for the two-phase flow in porous media obtained using the CFO scheme \eqref{min-LagrangeForm2-01}-\eqref{min-LagrangeForm2-02}.}
\label{fig:twophase-FOG}
\end{figure}

\section*{Acknowledgements}
The authors are grateful to Todd Arbogast, Malgorzata Peszynska, Ralph Showalter, and Son-Young Yi for a helpful discussion of this work during the SIAM 2017 Mathematical and Computational Geoscience conference in Erlangen Germany. This discussion has led to the name of ``flux optimization" for the numerical scheme developed in this paper.

\newpage

\end{document}